\definecolor{dark green}{rgb}{0,0.6,0}
\newtheorem{theorem}{Theorem}[section]
\newtheorem{lemma}[theorem]{Lemma}
\newtheorem{corollary}[theorem]{Corollary}
\theoremstyle{definition}
\newtheorem{definition}[theorem]{Definition}
\newtheorem{example}[theorem]{Example}
\theoremstyle{remark}
\numberwithin{equation}{section}
\title[Typed proof]{On the $P_3$-hull number and infecting times of generalized Petersen graphs}
\author[]{Daniel Herden, Jonathan Meddaugh, Mark Sepanski, Isaac Echols, Nina Garcia-Montoya, Cordell Hammon, Guanjie Huang, Adam Kraus, Jorge Marchena Menendez, Jasmin Mohn, Rafael Morales Jiménez}
\address{
All authors:
Department of Mathematics,
Baylor University,
Sid Richardson Building,
1410 S.4th Street,
Waco, TX 76706, USA}
\email{
mark\_sepanski@baylor.edu}
\begin{document}


\keywords{$P_3$-hull number, decycling number, infecting time, generalized Petersen graphs}
\subjclass[2020]{Primary: 05C05, 05C38, 05C85; Secondary 05C76}
\begin{abstract}
The $P_3$-hull number of a graph is the minimum cardinality of an infecting set of vertices that will eventually infect the entire graph under the rule that uninfected nodes become infected if two or more neighbors are infected. In this paper, we study the $P_3$-hull number
for generalized Petersen graphs and a number of closely related graphs that arise from surgery or more generalized permutations. In addition, the number of components of the complement of an infecting set of minimum cardinality is calculated for the generalized Petersen graph and shown to always be $1$ or $2$. Moreover, infecting times for infecting sets of minimum cardinality are studied. Bounds are provided and complete information is given in special cases.
\end{abstract}
\maketitle

\section{Introduction}
As introduced in \cite{coffee-time} and \cite{geodetic}, the $P_3$-hull number of a simple connected graph is the minimum cardinality of a set $U$ of initially infected vertices that will eventually infect the entire graph where an uninfected node becomes infected if two or more of its neighbors are infected. There has been much work on formulas for the $P_3$-hull numbers of various types of graphs, \cite{p_3-Hamming, MR3040145, p_3-product, p3Kneser}, as well as with the closely related notion of the $2$-neighbor bootstrap percolation problem,
\cite{2-neighbour_bootstrap_percolation, Marcilon_Thiago, Przykucki_Michal}.

Important to this paper is the decycling number. Given a graph $G$, its decycling number, $\nabla(G)$, is the minimum cardinality of a set $U$ of vertices such that $G-U$ is acyclic. In general, it is very hard to compute a graph's decycling number. In fact, it has been shown to be NP-complete \cite{DecyclingNumberIsNPComplete}. However, results in special cases have been obtained, \cite{decycleOfGraphs, decylceRandom, DecyclingNumberOfGeneralizedPetersenGraphs, decycleCubic, decycleBoxProduct}.

In this paper, after initial definitions, we show that for a cubic graph, the $P_3$-hull number and the decycling number coincide, Theorem \ref{Main Theorem}. By \cite{DecyclingNumberOfGeneralizedPetersenGraphs}, it follows that the $P_3$-hull number of the generalized Petersen graph, $G(n,k)$, is $\left\lceil\frac{n+1}{2}\right\rceil$, Corollary \ref{p3_hull_GP}. Furthermore, the complement of any initial infecting set is a forest. In Theorem \ref{components thm}, it is show that for any infecting set of minimum cardinality this forest always has exactly one or two components.

In addition, we introduce the notion of the infecting time of an infecting set and study it for the Petersen graph. Explicit times are computed for special infecting sets, Theorem \ref{dan_time}. Giving explicit formulas for the minimal and maximal infecting times is a very difficult problem. However, we give complete answers for the special case of $G(n,1)$, Theorem \ref{full_time}.

Finally, we introduce a number of graphs related to the generalized Petersen graph. For a type of surgery, $G(n,k)\# G(n,k)$, the $P_3$-hull number is computed in Theorem \ref{thm_surgery}. Associated to a permutation $\sigma$ of $S_n$, a generalization of $G(n,k)$, called $GG(n,\sigma)$, is introduced. General bounds for its $P_3$-hull number are given in Theorem \ref{odd cycles upper bound}. An exact answer is computed under an additional hypothesis in Theorem \ref{thm:minGGP}.

\section{Initial Definitions}

    Throughout the paper, let $G = (V,E)$ be a finite, simple, connected graph and let $S\subseteq V$. We write $G[S]$ for the corresponding \textit{induced subgraph} of $G$ on $S$. We say $G$ is \textit{cubic} if each vertex of $G$ has degree $3$.

    Following \cite{p3Kneser}, the \textit{$P_3$-interval}, $I[S]$, is the set $S$ together with all vertices in $G$ that have two or more neighbors in $S$.
     If $I[S] = S$, then the set $S$ is called \textit{$P_3$-convex}.
     The $P_3$\textit{-convex hull}, $H_\mathcal C(S)$ of $S$, is the smallest $P_3$-convex set containing $S$.
     Iteratively, define $I^0[S] = S$ and $I^p[S] = I[I^{p-1}[S]]$ for any positive integer $p$. Then $H_\mathcal C(S)$ is the union of all $I^p[S]$.

     If $H_\mathcal C(S) = V$, we say that $S$ is a \textit{$P_3$-hull set} of $G$. We also refer to a $P_3$-hull set as an \textit{infecting set}. The minimum cardinality, $h_{P_3}(G)$, of a $P_3$-hull set in G is called the \textit{$P_3$-hull number} of G. This will be the main object of our study, and we will refer to $P_3$-hull sets of cardinality $h_{P_3}(G)$ as \textit{minimum size infecting sets}. For a $P_3$-hull set $S$, we say that the \textit{infecting time} of $S$, denoted $T_I(S)$, is the smallest integer $p$ such that $I^p[S] = V$.

    Relevant for this paper, we say that $S$ is a \textit{decycling set} of $G$ if the induced subgraph $G[V-S]$ is acyclic, \cite{decycleOfGraphs, decylceRandom}. The minimum cardinality of a decycling set of $G$ is called the \textit{decycling number} of $G$ and denoted by $\nabla(G)$.

Recall that a \textit{generalized Petersen graph}, $G(n,k)$ with $1\le k< \frac n2$, has vertex set
\[V=\{{u}_0,{u}_1,\ldots,{u}_{n-1},\,v_0,v_1,\dots,v_{n-1}\}\]
and edge set (interpreting each index modulo $n$)
\[E=\{{u}_i{u}_{i+1},\,{u}_iv_i, \,v_iv_{i+k}:0\leq i\leq n-1\}.\]

\section{$P_3$-Hull Numbers and Minimum Infecting Sets for $G(n,k)$}

\begin{theorem}\label{Main Theorem}
    Let $G = (V,E)$ be a cubic graph and $S\subseteq V$. Then $S$ is an infecting set of $G$ if and only if it is a decycling set of $G$. In particular, $h_{P_3}(G) = \nabla(G)$.
\end{theorem}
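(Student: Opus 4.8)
The plan is to prove both implications by connecting the dynamics of infection to the structure of cycles in a cubic graph. The key observation I would exploit is that in a cubic graph, every vertex has degree exactly $3$, so the complement $V-S$ and the infection rule interact cleanly. Specifically, a vertex $w \in V-S$ becomes infected when it has two or more infected neighbors; since $\deg(w)=3$, this means $w$ has at most one neighbor remaining in the currently uninfected set. I would phrase the infection process as a peeling procedure on the induced subgraph $G[V-S]$: an uninfected vertex gets infected precisely when its degree within the current uninfected induced subgraph drops to $1$ or $0$.

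For the direction that a decycling set is an infecting set, suppose $G[V-S]$ is a forest. Forests always contain a leaf (a vertex of degree $\le 1$) unless they are empty. I would argue that we can iteratively infect leaves: any vertex of $G[V-S]$ with degree $\le 1$ in the current uninfected subgraph has at least two of its three neighbors already infected, so it becomes infected at the next step. Removing leaves from a forest leaves a forest, so this process never stalls until $V-S$ is exhausted; hence $S$ infects all of $V$. The main technical point here is verifying that degree $\le 1$ in the uninfected \emph{induced} subgraph translates to $\ge 2$ infected neighbors in $G$, which is exactly where cubicness (degree $3$) is used.

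For the converse, suppose $G[V-S]$ contains a cycle $C$. I would show $S$ cannot be an infecting set by proving that no vertex of $C$ ever gets infected. The idea is an invariant argument: at every stage of the infection process, I claim every vertex of $C$ still has at least two uninfected neighbors, namely its two cycle-neighbors along $C$. Since infection requires two or more \emph{infected} neighbors, and a cubic vertex on $C$ has only one neighbor off the cycle, a vertex of $C$ can have at most one infected neighbor as long as its cycle-neighbors stay uninfected. By induction on the infection steps, no vertex of $C$ is ever infected, so $H_{\mathcal C}(S)\ne V$ and $S$ is not an infecting set. Contrapositively, every infecting set is a decycling set.

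The hardest part will be making the invariant in the converse fully rigorous: I must show simultaneously for all vertices of $C$ that they resist infection, using a clean induction that at step $p$ none of the cycle vertices lie in $I^p[S]$. The subtlety is that the two cycle-neighbors of a given vertex are themselves governed by the same invariant, so the induction must be carried out on the whole cycle at once rather than vertex by vertex; one assumes no cycle vertex is infected through step $p$ and deduces the same at step $p+1$. Once both implications are established, the equality $h_{P_3}(G)=\nabla(G)$ follows immediately by taking minimum cardinalities over the two (identical) families of sets.
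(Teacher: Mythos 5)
Your proposal is correct and follows essentially the same route as the paper: your simultaneous induction showing that no vertex of a cycle in $G[V-S]$ is ever infected is exactly the paper's argument via the minimal step $n$ at which the cycle is first touched, and your leaf-peeling of the forest $G[V-S]$ is the paper's observation that if $H_{\mathcal C}(S)$ were a proper subset of $V$, its complement would be a forest containing a degree-$1$ vertex with two infected neighbors, contradicting $P_3$-convexity. Both directions and the concluding equality $h_{P_3}(G)=\nabla(G)$ are handled as in the paper, with no gaps.
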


\begin{proof}
	We show first that an infecting set is a decycling set via the contrapositive. Assume that $S$ is not a decycling set. Then there exists some nonempty $W \subseteq V- S$ so that $G[W]$ is a cycle. Hence each $w\in W$ has exactly two neighbors in $W$ and one neighbor in $V- W$.
	
	Suppose now that $S$ is an infecting set. Since $S\cap W=\emptyset$, there is a minimal integer $n\ge 0$ so that $I^{n}[S] \cap W = \emptyset$ and $I^{n+1}[S] \cap W \not= \emptyset$. But then for $w\in I^{n+1}[S] \cap W$, there must have been two neighbors of $w$ in $I^{n}[S]$. As $I^{n}[S] \cap W = \emptyset$, these two neighbors must lie in $V-W$ which is a contradiction.
	
    Next we show that a decycling set is an infecting set.
    Suppose that $S$ is a decycling set of $G$. If $S$ is not infecting, then $H_\mathcal C(S)$ is a proper subset of $V$ that is still decycling. As the subgraph induced by $V-H_\mathcal C(S)$ is a forest, there exists some $v\in V-H_\mathcal C(S)$ with degree $1$. This implies two neighbors of $v$ lie in $H_\mathcal C(S)$ which is a contradiction.
\end{proof}

\begin{corollary} \label{p3_hull_GP}
$h_{P_3}(G(n,k)) = \lceil\frac{n+1}{2}\rceil$.
\end{corollary}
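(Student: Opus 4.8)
The plan is to apply Theorem \ref{Main Theorem} together with the known decycling number of the generalized Petersen graph.
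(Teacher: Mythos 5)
Your proposal is correct and matches the paper's proof exactly: since $G(n,k)$ is cubic, Theorem \ref{Main Theorem} gives $h_{P_3}(G(n,k)) = \nabla(G(n,k))$, and the cited result of \cite{DecyclingNumberOfGeneralizedPetersenGraphs} shows $\nabla(G(n,k)) = \left\lceil\frac{n+1}{2}\right\rceil$. Nothing further is needed.
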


\begin{proof}
    This follows from Theorem \ref{Main Theorem} and  \cite[Theorem 3.1]{DecyclingNumberOfGeneralizedPetersenGraphs}, where it is shown that  $\nabla(G(n,k)) = \left\lceil\frac{n+1}{2}\right\rceil$.
\end{proof}

As a prelude to infecting time calculations, we present an explicit minimum size infecting set for $G(n,k)$ (examples can be seen in Figure \ref{ExamplesFigure}). An alternate example may be found in \cite[Lemma 3.3]{DecyclingNumberOfGeneralizedPetersenGraphs}.

\begin{corollary}\label{infecting_set}
	Let $c = \gcd(n,k)$,  $l=\frac{n}{c}$, and
	\[ \mathcal S_v = \{v_{j+ik}:0\leq i\leq l-1,\, 0\leq j\leq c-1,\, i\text{ odd }\}. \]
	For $l$ even, let
	\[ \mathcal S_u = \{u_0\}\]
	and for $l$ odd, let
	\[ \mathcal S_u = \{u_{c-1}\} \cup \{u_j:0\leq j\leq c-1,
	\, j\text{ even }\}. \]
	Then \[ \mathcal S = \mathcal S_v \cup \mathcal S_u \] is a minimum size infecting set for $G(n,k)$.
\end{corollary}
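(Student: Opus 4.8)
The plan is to verify two things: that $|\mathcal S| = \lceil\frac{n+1}{2}\rceil$, and that $\mathcal S$ is an infecting set. Since Corollary \ref{p3_hull_GP} gives $h_{P_3}(G(n,k)) = \lceil\frac{n+1}{2}\rceil$, the cardinality count shows $\mathcal S$ has the minimum possible size, so it then only remains to confirm that $\mathcal S$ infects. Because $G(n,k)$ is cubic, Theorem \ref{Main Theorem} lets me replace ``infecting'' by ``decycling'': it suffices to prove that $G[V-\mathcal S]$ is acyclic. For the cardinality I would simply count $\mathcal S_v$ and $\mathcal S_u$, splitting into the cases $l$ even, and $l$ odd with $c$ even or odd. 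In each case $|\mathcal S_v| = c\lfloor l/2\rfloor$, and a short computation with $|\mathcal S_u|$ (paying attention to whether $u_{c-1}$ is already counted among the even-indexed $u_j$) yields the total $\lceil\frac{n+1}{2}\rceil$ in every case.

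Next I would describe the induced subgraph $G[V-\mathcal S]$. The inner edges $v_iv_{i+k}$ partition the inner vertices into $c$ cycles $C_0,\dots,C_{c-1}$, where $C_j$ consists of the vertices $v_{j+ik}$ for $0\le i\le l-1$, and $\mathcal S_v$ deletes exactly the odd-position vertices of each $C_j$. Thus the retained inner vertices are those at even positions $i$, and two such vertices are adjacent in $C_j$ only through the wrap-around edge joining positions $l-1$ and $0$. Hence if $l$ is even no inner edge survives, while if $l$ is odd each $C_j$ contributes exactly one surviving inner edge $e_j = v_{j+(l-1)k}v_j$. On the outer side, $\mathcal S_u$ deletes at least one vertex, so the retained outer vertices induce a disjoint union of paths, which is already acyclic.

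Finally comes the acyclicity argument. Suppose toward a contradiction that $G[V-\mathcal S]$ contains a cycle $Z$. Every retained inner vertex is incident to at most one surviving inner edge and at most one spoke, so a cycle vertex that is an inner vertex must use exactly one inner edge together with its spoke. In particular, if $Z$ met no inner vertex it would lie inside the acyclic outer paths, so $Z$ must use at least one surviving inner edge $e_j=v_{j+(l-1)k}v_j$ (forcing $l$ odd). Both endpoints of $e_j$ then have degree two in $Z$, which requires both incident spokes to survive and lie on $Z$; in particular $u_j$ must be retained. The crux is that this is impossible: when $l$ is odd the deleted outer vertices all have index in $\{0,\dots,c-1\}$, namely the even indices together with $c-1$, so a retained $u_j$ with $0\le j\le c-1$ forces $j$ odd and $j\neq c-1$, whence both outer neighbors $u_{j-1}$ and $u_{j+1}$ are deleted. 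Then $u_j$ has degree one in $G[V-\mathcal S]$ and cannot lie on $Z$, a contradiction.

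I expect this last step, carefully bookkeeping which outer vertices are deleted and confirming that the forced spoke endpoint $u_j$ is a leaf, to be the main obstacle, since it requires tracking indices modulo $n$ against the parities of $l$ and $c$; by contrast the $l$ even case is immediate because no inner edge survives and $Z$ would be confined to the outer path. Together these cases show $G[V-\mathcal S]$ is a forest, completing the proof that $\mathcal S$ is a minimum size infecting set.
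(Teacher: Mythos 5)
Your proof is correct, but it takes a genuinely different route from the paper's. The paper verifies directly that $\mathcal S$ is an infecting set by simulating the spread of infection: for $l$ even, the alternating pattern on each inner cycle $V_j$ infects all inner vertices in one step and $u_0$ then carries the infection around the outer cycle; for $l$ odd, it first fills in the path $u_0,\dots,u_{c-1}$, uses the infected $u_j$ together with $v_{j+k}$ to infect $v_j$, completes the inner cycles, and finally the outer cycle. You instead invoke Theorem \ref{Main Theorem} to trade the dynamic condition for the static one that $G[V-\mathcal S]$ be acyclic, and you verify acyclicity by a clean degree analysis: no inner edge survives when $l$ is even; exactly one wrap-around edge $e_j=v_{j+(l-1)k}v_j$ per inner cycle survives when $l$ is odd; and any putative cycle through $e_j$ forces the spoke endpoint $u_j$ (with $0\le j\le c-1$, hence $j$ odd and $j\neq c-1$ if retained) onto the cycle, where it is a leaf because both $u_{j-1}$ and $u_{j+1}$ lie in $\mathcal S_u$ — and your phrasing correctly degenerates to an immediate contradiction when $c\in\{1,2\}$, where no $u_j$ with $0\le j\le c-1$ is retained at all. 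Both arguments share the same cardinality count and appeal to Corollary \ref{p3_hull_GP} for minimality, and I checked your bookkeeping ($|\mathcal S_v|=c\lfloor l/2\rfloor$ and $|\mathcal S_u|=\lceil\frac{c+1}{2}\rceil$ for $l$ odd) — it comes out to $\lceil\frac{n+1}{2}\rceil$ in every case. What your route buys is a static, easily checkable certificate (the complement is a forest), consistent with the decycling perspective the paper itself establishes; what the paper's route buys is the explicit propagation schedule, which is reused almost verbatim in the infecting-time computation of Theorem \ref{dan_time}, so the direct simulation is not redundant in the context of the paper.
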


\begin{figure}[H]
	\centering
	\begin{tikzpicture}
		\node[fill,circle,inner sep=0pt,minimum size=1pt] (a) at (4*1,4*0) {};
		\node[fill,circle,inner sep=0pt,minimum size=1pt] (b) at (4*.866,.5*4) {};
		\node[fill,circle,inner sep=0pt,minimum size=1pt] (c) at (4*.5,.866*4) {};
		\node[fill,circle,inner sep=3pt,minimum size=1pt, label = \hspace{0em}$u_0$] (d) at (4*0,1*4) {};
		\node[fill,circle,inner sep=0pt,minimum size=1pt] (g) at (4*-1,0*4) {};
		\node[fill,circle,inner sep=0pt,minimum size=1pt] (f) at (4*-.866,.5*4) {};
		\node[fill,circle,inner sep=0pt,minimum size=1pt] (e) at (4*-.5,.866*4) {};
		\node[fill,circle,inner sep=0pt,minimum size=1pt] (j) at (4*0,-1*4) {};
		\node[fill,circle,inner sep=0pt,minimum size=1pt] (h) at (4*-.866,-.5*4) {};
		\node[fill,circle,inner sep=0pt,minimum size=1pt] (i) at (4*-.5,-.866*4) {};
		\node[fill,circle,inner sep=0pt,minimum size=1pt] (l) at (4*.866,-.5*4) {};
		\node[fill,circle,inner sep=0pt,minimum size=1pt] (k) at (4*.5,-.866*4) {};
		
		\draw[black] (a) -- (b);
		\draw[black] (b) -- (c);
		\draw[black] (c) -- (d);
		\draw[black] (d) -- (e);
		\draw[black] (e) -- (f);
		\draw[black] (f) -- (g);
		\draw[black] (g) -- (h);
		\draw[black] (h) -- (i);
		\draw[black] (i) -- (j);
		\draw[black] (j) -- (k);
		\draw[black] (k) -- (l);
		\draw[black] (l) -- (a);
		
		\node[fill,circle,inner sep=3pt,minimum size=1pt, label = \hspace{1.5em}$v_3$] (m) at (2.5*1,2.5*0) {};
		\node[fill,circle,inner sep=3pt,minimum size=1pt, label = \hspace{1.5em}$v_2$] (n) at (2.5*.866,.5*2.5) {};
		\node[fill,circle,inner sep=0pt,minimum size=1pt] (o) at (2.5*.5,.866*2.5) {};
		\node[fill,circle,inner sep=0pt,minimum size=1pt] (p) at (2.5*0,1*2.5) {};
		\node[fill,circle,inner sep=0pt,minimum size=1pt] (s) at (2.5*-1,0*2.5) {};
		\node[fill,circle,inner sep=3pt,minimum size=1pt, label = \hspace{1.5em}$v_{10}$] (r) at (2.5*-.866,.5*2.5) {};
		\node[fill,circle,inner sep=3pt,minimum size=1pt, label = \hspace{1.5em}$v_{11}$] (q) at (2.5*-.5,.866*2.5) {};
		\node[fill,circle,inner sep=3pt,minimum size=1pt, label = \hspace{1.5em}$v_6$] (v) at (2.5*0,-1*2.5) {};
		\node[fill,circle,inner sep=0pt,minimum size=1pt] (t) at (2.5*-.866,-.5*2.5) {};
		\node[fill,circle,inner sep=3pt,minimum size=1pt, label = \hspace{1.5em}$v_7$] (u) at (2.5*-.5,-.866*2.5) {};
		\node[fill,circle,inner sep=0pt,minimum size=1pt] (x) at (2.5*.866,-.5*2.5) {};
		\node[fill,circle,inner sep=0pt,minimum size=1pt] (w) at (2.5*.5,-.866*2.5) {};
		
		\draw[black] (n) -- (p);
		\draw[black] (p) -- (r);
		\draw[black] (r) -- (t);
		\draw[black] (t) -- (v);
		\draw[black] (v) -- (x);
		\draw[black] (x) -- (n);
		
		\draw[black, dashed] (m) -- (o);
		\draw[black, dashed] (o) -- (q);
		\draw[black, dashed] (q) -- (s);
		\draw[black, dashed] (s) -- (u);
		\draw[black, dashed] (u) -- (w);
		\draw[black, dashed] (w) -- (m);
		
		\draw[black] (m) -- (a);
		\draw[black] (n) -- (b);
		\draw[black] (o) -- (c);
		\draw[black] (p) -- (d);
		\draw[black] (q) -- (e);
		\draw[black] (r) -- (f);
		\draw[black] (s) -- (g);
		\draw[black] (t) -- (h);
		\draw[black] (u) -- (i);
		\draw[black] (v) -- (j);
		\draw[black] (w) -- (k);
		\draw[black] (x) -- (l);
		
	\end{tikzpicture}
	\qquad
	\begin{tikzpicture}
		\node[fill,circle,inner sep=3pt,minimum size=1pt, label = \hspace{1.5em}$u_3$] (a) at (4*1,4*0) {};
		\node[fill,circle,inner sep=3pt,minimum size=1pt, label = \hspace{1.5em}$u_2$] (b) at (4*.866,.5*4) {};
		\node[fill,circle,inner sep=0pt,minimum size=1pt] (c) at (4*.5,.866*4) {};
		\node[fill,circle,inner sep=3pt,minimum size=1pt, label = \hspace{0em}$u_0$] (d) at (4*0,1*4) {};
		\node[fill,circle,inner sep=0pt,minimum size=1pt] (g) at (4*-1,0*4) {};
		\node[fill,circle,inner sep=0pt,minimum size=1pt] (f) at (4*-.866,.5*4) {};
		\node[fill,circle,inner sep=0pt,minimum size=1pt] (e) at (4*-.5,.866*4) {};
		\node[fill,circle,inner sep=0pt,minimum size=1pt] (j) at (4*0,-1*4) {};
		\node[fill,circle,inner sep=0pt,minimum size=1pt] (h) at (4*-.866,-.5*4) {};
		\node[fill,circle,inner sep=0pt,minimum size=1pt] (i) at (4*-.5,-.866*4) {};
		\node[fill,circle,inner sep=0pt,minimum size=1pt] (l) at (4*.866,-.5*4) {};
		\node[fill,circle,inner sep=0pt,minimum size=1pt] (k) at (4*.5,-.866*4) {};
		
		\draw[black] (a) -- (b);
		\draw[black] (b) -- (c);
		\draw[black] (c) -- (d);
		\draw[black] (d) -- (e);
		\draw[black] (e) -- (f);
		\draw[black] (f) -- (g);
		\draw[black] (g) -- (h);
		\draw[black] (h) -- (i);
		\draw[black] (i) -- (j);
		\draw[black] (j) -- (k);
		\draw[black] (k) -- (l);
		\draw[black] (l) -- (a);
		
		\node[fill,circle,inner sep=0pt,minimum size=1pt] (m) at (2.5*1,2.5*0) {};
		\node[fill,circle,inner sep=0pt,minimum size=1pt] (n) at (2.5*.866,.5*2.5) {};
		\node[fill,circle,inner sep=0pt,minimum size=1pt] (o) at (2.5*.5,.866*2.5) {};
		\node[fill,circle,inner sep=0pt,minimum size=1pt] (p) at (2.5*0,1*2.5) {};
		\node[fill,circle,inner sep=0pt,minimum size=1pt] (s) at (2.5*-1,0*2.5) {};
		\node[fill,circle,inner sep=0pt,minimum size=1pt] (r) at (2.5*-.866,.5*2.5) {};
		\node[fill,circle,inner sep=0pt,minimum size=1pt] (q) at (2.5*-.5,.866*2.5) {};
		\node[fill,circle,inner sep=3pt,minimum size=1pt, label = \hspace{1.5em}$v_6$] (v) at (2.5*0,-1*2.5) {};
		\node[fill,circle,inner sep=0pt,minimum size=1pt] (t) at (2.5*-.866,-.5*2.5) {};
		\node[fill,circle,inner sep=3pt,minimum size=1pt, label = \hspace{1.5em}$v_7$] (u) at (2.5*-.5,-.866*2.5) {};
		\node[fill,circle,inner sep=3pt,minimum size=1pt, label = \hspace{1.5em}$v_4$] (x) at (2.5*.866,-.5*2.5) {};
		\node[fill,circle,inner sep=3pt,minimum size=1pt, label = \hspace{1.5em}$v_5$] (w) at (2.5*.5,-.866*2.5) {};

		\draw[black] (p) -- (t);
		\draw[black] (t) -- (x);
		\draw[black] (x) -- (p);
		
		\draw[black, dashed] (q) -- (u);
		\draw[black, dashed] (u) -- (m);
		\draw[black, dashed] (m) -- (q);
		
		\draw[black, densely dotted] (r) -- (v);
		\draw[black, densely dotted] (v) -- (n);
		\draw[black, densely dotted] (n) -- (r);
		
		\draw[black, dashdotted] (s) -- (w);
		\draw[black, dashdotted] (w) -- (o);
		\draw[black, dashdotted] (o) -- (s);
		
		\draw[black] (m) -- (a);
		\draw[black] (n) -- (b);
		\draw[black] (o) -- (c);
		\draw[black] (p) -- (d);
		\draw[black] (q) -- (e);
		\draw[black] (r) -- (f);
		\draw[black] (s) -- (g);
		\draw[black] (t) -- (h);
		\draw[black] (u) -- (i);
		\draw[black] (v) -- (j);
		\draw[black] (w) -- (k);
		\draw[black] (x) -- (l);
		
	\end{tikzpicture}
	\centering
	\caption{Minimum size infecting sets for $G(12,2)$ and $G(12, 4)$, respectively.}
	\label{ExamplesFigure}
\end{figure}
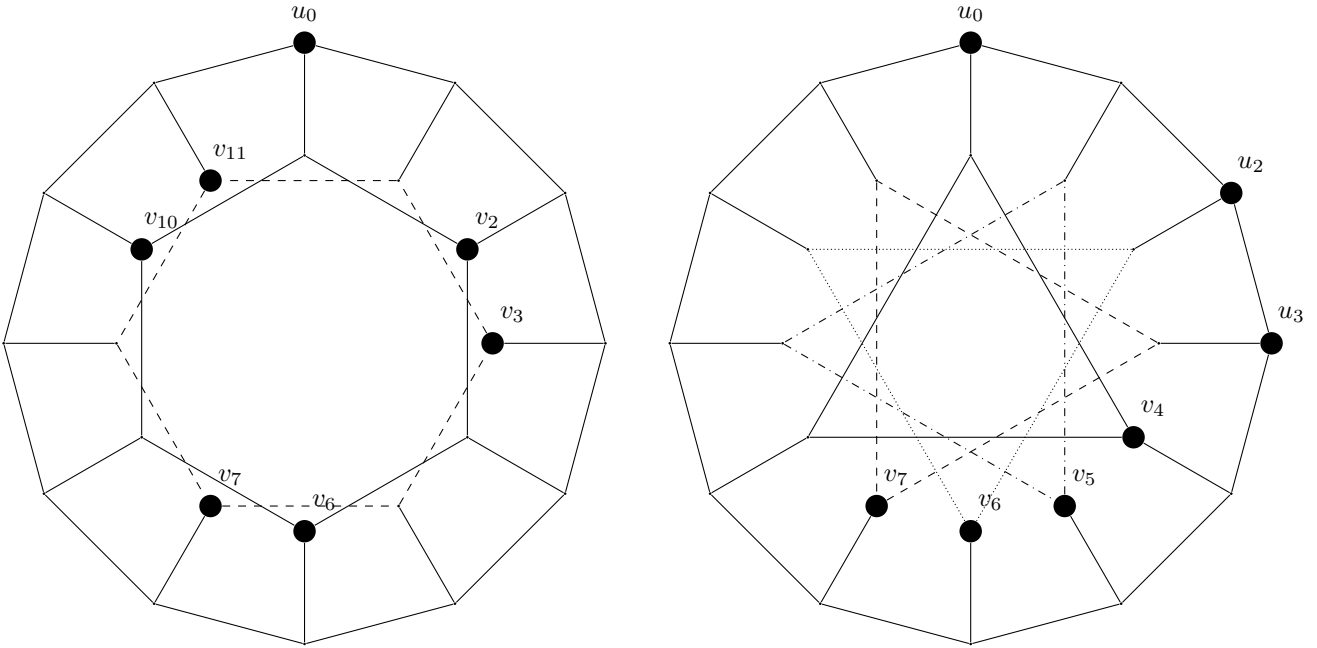

\begin{proof}
	Note that the subgraph of $G(n,k)$ which is induced on the vertex set $\{v_0,v_1,\ldots,v_{n-1}\}$ is the disjoint union of $c$ cycles of length $l$. The corresponding vertex set of each cycle is given by 	\[ V_j = \{v_{j+ik}:0\leq i\leq l-1\}\]
	with $0\leq j \leq c-1$.
	
	We will distinguish two cases. The first is when $l$ is even.
	Given $\mathcal S$ as an initial set of infected points, the infection will spread to infect every vertex set $V_j$, $0\leq j\leq c-1$. From there with ${u}_0$ the infection spreads to all of $\{{u}_0, {u}_1, \ldots, {u}_{n-1}\}.$ Note that $n=cl$ is even and
	\[|\mathcal S|=1+c\frac{l}{2}=\frac{n}{2}+1=\bigg\lceil\frac{n+1}{2}\bigg\rceil\]
	so that $\mathcal S$ is a minimum size infecting set by Corollary \ref{p3_hull_GP}.
	
	Turn now to the case of $l$ odd.
	Given $\mathcal S$ as an initial set of infected points, the infection will spread to $\{{u}_j:0\leq j\leq c-1\}.$ With ${u}_j$ and $v_{j+k}$ infected, also $v_j$ will be infected, and the infection will spread to infect every vertex set $V_j,\hspace{1mm} 0\leq j\leq c-1$. From there with $\{{u}_j:0\leq j\leq c-1\}$ the infection spreads to all of $\{{u}_0,{u}_1,\ldots,{u}_{n-1}\}.$ As
	\[|\mathcal S|=\bigg\lceil\frac{c+1}{2}\bigg\rceil +c\frac{l-1}{2}=\bigg\lceil c\frac{l-1}{2}+\frac{c+1}{2}\bigg\rceil=\bigg\lceil\frac{n+1}{2}\bigg\rceil,\]
	it follows that $\mathcal S$ is a minimum size infecting set.
\end{proof}

By Theorem \ref{Main Theorem}, the complement of a minimum size infecting set of a cubic graph is a forest. The next theorem constrains the number of connected components of this forest for $G(n,k)$.

\begin{theorem}\label{components thm}
	Let $S$ be a minimum size infecting set of the generalized Petersen graph $G=G(n,k)=(V,E)$.

\begin{itemize}	
	\item If $n$ is odd, then $G[V-S]$ is a tree.
	
	\item If $n$ is even, then the forest $G[V-S]$ may have two or one connected components. It will have two connected components if and only if $S$ has no neighboring points and one connected component if and only if $S$ has exactly one pair of points that are neighbors.
\end{itemize}
\end{theorem}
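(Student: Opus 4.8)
The plan is to reduce the entire statement to a single edge-counting identity. Since $G(n,k)$ is cubic on $2n$ vertices, it has exactly $3n$ edges. By Theorem \ref{Main Theorem} the complement $F = G[V-S]$ of an infecting set is a forest, and the number of connected components of a nonempty forest equals its number of vertices minus its number of edges. So I would express both quantities in terms of $|S|$ and one extra parameter: the number $e$ of edges of $G$ having both endpoints in $S$ (equivalently, the number of adjacent pairs inside $S$).

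First I would partition the $3n$ edges of $G$ into three classes: the $e$ edges inside $S$, the $f$ edges inside $V-S$ (these are exactly the edges of $F$), and the $m$ edges joining $S$ to $V-S$. Summing the degrees of the vertices in $S$ and using that $G$ is cubic gives $3|S| = 2e + m$, while counting all edges gives $3n = e + f + m$. Eliminating $m$ yields $f = 3n - 3|S| + e$. Since $|V-S| = 2n - |S|$, the number of components of $F$ is
\[ c = (2n - |S|) - f = 2|S| - n - e. \]

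Next I would substitute $|S| = \left\lceil\frac{n+1}{2}\right\rceil$ from Corollary \ref{p3_hull_GP}. When $n$ is odd this equals $\frac{n+1}{2}$, so $c = 1 - e$; when $n$ is even it equals $\frac{n}{2}+1$, so $c = 2 - e$. The conclusion then follows from two elementary constraints: $e \geq 0$, and $c \geq 1$ because $V-S$ is nonempty (one checks $2n - |S| > 0$). For $n$ odd these force $e = 0$ and $c = 1$, so $F$ is a single tree. For $n$ even they force $e \in \{0,1\}$, giving $c = 2$ exactly when $S$ has no pair of adjacent vertices and $c = 1$ exactly when $S$ contains exactly one adjacent pair, which is precisely the claimed dichotomy (the relation $c = 2 - e$ makes both ``if and only if'' statements immediate).

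There is no serious obstacle once the identity $c = 2|S| - n - e$ is established; the only point requiring a moment's care is justifying $c \geq 1$, equivalently $V - S \neq \emptyset$, so that the inequality $c = 2 - e \geq 1$ can be used to rule out $e \geq 2$ in the even case. Everything else is the cubic degree-sum bookkeeping together with the explicit value of the $P_3$-hull number.
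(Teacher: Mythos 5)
Your proposal is correct and takes essentially the same route as the paper: both count vertices and edges of the forest $G[V-S]$ inside the cubic graph and use that the number of components equals vertices minus edges, the paper via the inequality $\epsilon \geq 3n - 3|S|$ (with equality cases asserted), and you via the exact identity $c = 2|S| - n - e$. Your exact version is a mild refinement that makes both ``if and only if'' claims in the even case immediate rather than asserted, and it confirms the theorem as stated ($e=0$ gives two components, $e=1$ gives one) --- worth noting since the final sentence of the paper's own proof inadvertently states these two cases in reverse.
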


\begin{proof}
	Recall for $G$ that $|V|=2n$ and $|E|=3n$. Write $\nu$ and $\epsilon$ for the number of vertices and edges, respectively, of $G[V-S]$. As $G[V-S]$ is a forest, $\nu-\epsilon$ is the number of trees in the forest.
	
	When $n$ is odd, write $n=2k+1$ so that $|S|=\lceil \frac{n+1}{2} \rceil= k+1$ and $\nu=2(2k+1) - (k+1) = 3 k +1$. At most, if $S$ has no neighboring points, passing from $G$ to $G[V-S]$ would remove $3|S|$ edges. Therefore, $\epsilon
	\geq 3(2k+1) - 3(k+1) = 3k$. It follows that $\nu- \epsilon \le 1$, and $G[V-S]$ must be a single tree.
	
	When $n$ is even, write $n=2k$ so $|S|=\lceil \frac{n+1}{2} \rceil= k+1$ and $\nu = 2(2k) - (k+1) = 3 k -1$. As in the previous paragraph, we get $\epsilon
	\geq 3(2k) - 3(k+1) = 3k-3$. It follows that $\nu- \epsilon \le 2$, and $G[V-S]$ has either one or two connected components. In addition, $G[V-S]$ is a tree exactly when $S$ has no neighboring points, and $G[V-S]$ has two trees exactly when $S$ has exactly one pair of neighboring points.
\end{proof}

\section{Infecting Times}

\begin{theorem}\label{dan_time}
	Let $c = \gcd(n,k)$,  $l=\frac{n}{c}$, and $\mathcal S$ be the infecting set for $G(n,k)$ from Corollary \ref{infecting_set}.
\begin{itemize}	
	\item When $l$ is even, the infecting time for $\mathcal S$ is $\frac{n}{2}$.
    \item When $l$ is odd, the infecting time for $\mathcal S$ is $\frac{n-c}{2}+1$.
\end{itemize}
\end{theorem}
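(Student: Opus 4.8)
The plan is to track the infected sets $I^p[\mathcal S]$ explicitly, separating the moment the inner vertices become infected from the subsequent spreading along the outer cycle $u_0u_1\cdots u_{n-1}$. The key simplification is that once a spoke $v_i$ is infected, the outer vertex $u_i$ needs only one further infected outer neighbor, so the terminal dynamics reduce to a one-dimensional unit-speed spread on the cycle $C_n$. Since $\mathcal S$ restricted to each inner cycle $V_j$ behaves differently according to the parity of $l$, I would treat the two cases separately and, in each, prove matching upper and lower bounds.

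For $l$ even I would first show that $I^1[\mathcal S]$ already contains every inner vertex: in each $V_j$ the set $\mathcal S_v$ consists of the odd-position vertices, and when $l$ is even every even-position vertex has both inner neighbors in $\mathcal S$, so is infected at step $1$. Next I would record the parity fact that $v_{n-1}\in\mathcal S_v$: writing $k=ck'$ with $\gcd(k',l)=1$, the position of $v_{n-1}$ in $V_{c-1}$ is $l-r$ with $r\equiv (k')^{-1}\pmod l$, and since $l$ is even $k'$ and hence $r$ are odd, so $l-r$ is odd. Consequently $u_{n-1}$ (and possibly $u_1$) is infected already at step $1$, so the infected outer vertices at step $1$ form a single contiguous arc of length $a\in\{2,3\}$ containing $u_0$. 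All spokes now being infected, the arc grows by exactly one vertex at each end per step, so everything is infected at step $1+\lceil (n-a)/2\rceil=\tfrac n2$; this is the upper bound. For the lower bound I would use that only $u_0,u_1,u_{n-1}$ can lie in $I^1[\mathcal S]$ (among outer vertices only $u_0\in\mathcal S$), so the smallest arc containing the infected outer vertices has length at most $3$ at step $1$ and grows by at most two per step thereafter, forcing at least $\tfrac n2$ steps.

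For $l$ odd the spread is staged as in the proof of Corollary~\ref{infecting_set}. First I would verify $\{u_0,\dots,u_{c-1}\}\subseteq I^1[\mathcal S]$: the even-indexed outer vertices in this range lie in $\mathcal S_u$, and each omitted odd-indexed one has both neighbors in $\mathcal S_u$, so is infected at step $1$. In each inner cycle the only vertices not infected at step $1$ are the two seam vertices at positions $0$ and $l-1$, namely $v_j$ and $v_{j-k}$; since the spokes $u_0,\dots,u_{c-1}$ are infected by step $1$, the position-$0$ seam vertices $v_0,\dots,v_{c-1}$ are infected by step $2$, and then the position-$(l-1)$ seam vertices by step $3$. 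From here the infection is again a unit-speed spread along $C_n$ from the arc $\{u_0,\dots,u_{c-1}\}$ of length $c$, and since $n-c=c(l-1)$ is even one expects the two fronts to meet after $(n-c)/2$ further steps, giving the claimed time $(n-c)/2+1$.

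The main obstacle is the precise accounting of the backward front in the odd case. In the forward direction the relevant spokes lie in $\mathcal S$ and never delay the front, but the spokes just behind $u_0$ are exactly the position-$(l-1)$ seam vertices, infected only at step $2$ or $3$; this stalls the backward front at its start, and the location of these late spokes (near index $n-k$) moves with $k$ even though the final answer depends only on $c$. Thus the naive ``arc grows by two per step'' estimate yields only the lower bound $(n-c)/2$, and recovering the extra $+1$ requires showing that the backward front's initial stall exactly compensates for the faster start of the forward front. I would resolve this by proving a closed inductive formula for $I^p[\mathcal S]$ that tracks the two fronts together with the seam-infection times, from which both the exact infecting time and the matching lower bound follow; verifying the induction across the transition from the seam phase to the steady unit-speed phase is the most delicate part.
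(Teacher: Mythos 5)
Your $l$ even case is sound and is essentially the paper's own argument (all inner vertices lie in $I^1[\mathcal S]$, the parity computation showing $v_{n-1}\in\mathcal S_v$, hence a contiguous outer arc of length $2$ or $3$ spreading at unit speed). The genuine gap is in the $l$ odd case, and you name it yourself: the exact meeting of the two fronts is deferred to ``a closed inductive formula for $I^p[\mathcal S]$'' that is never stated or proved, so the claimed time $\frac{n-c}{2}+1$ is not established in either direction. What you are missing is the short parity lemma the paper uses to close exactly this gap. Writing $v_c=v_{0+i_1k}$ and $v_{n-1}=v_{(c-1)+i_2k}$, the congruences $i_1\frac kc\equiv 1$ and $i_2\frac kc\equiv -1 \pmod l$ force $i_1+i_2=l$, which is odd; hence exactly one of $i_1,i_2$ is odd, exactly one of $v_c,v_{n-1}$ lies in $\mathcal S_v$, and $I^1[\mathcal S]$ contains exactly $c+1$ outer vertices forming a path (either $u_0,\dots,u_c$ or $u_{n-1},u_0,\dots,u_{c-1}$) --- not the arc $\{u_0,\dots,u_{c-1}\}$ of length $c$ that your accounting starts from. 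This one extra vertex is precisely your missing $+1$: with $c+1$ infected outer vertices at step $1$ and fronts advancing at most one vertex per end per step, the lower bound $1+\lceil\frac{n-c-1}{2}\rceil=\frac{n-c}{2}+1$ is immediate, and no mysterious ``compensation'' between a stalled backward front and a fast forward front has to be conjured.

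For the upper bound, the paper replaces your open-ended induction by a finite case check that the fronts never stall: $v_c\in I^1[\mathcal S]$ always (since $i_1=l-1$ would force $k+c\equiv 0 \pmod n$, impossible for $1\le c\le k<\frac n2$), so $u_c,u_{n-1}\in I^2[\mathcal S]$ when $k\nmid n$; when $k\mid n$ (the case where $v_{n-1}$ is itself a seam vertex) one instead gets $u_c\in I^1$, $u_{c+1}\in I^2$, $u_{c+2},u_{n-1}\in I^3$, so the backward front's one-step stall is exactly offset by the forward front's head start; the small cases $c\in\{1,2\}$ are checked separately. Each branch totals $\frac{n-c}{2}+1$. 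A secondary inaccuracy in your setup: it is not true that in each inner cycle the only vertices outside $I^1[\mathcal S]$ are the seam vertices at positions $0$ and $l-1$; every position-$0$ vertex $v_j$ whose spoke $u_j$ lies in $\mathcal S_u$ already has two infected neighbors at step $0$ (namely $u_j$ and the position-$1$ vertex) and so lies in $I^1[\mathcal S]$. This does not hurt your upper-bound direction, but it signals that the step-by-step bookkeeping near the seam --- the part you flag as ``most delicate'' --- was not actually carried out, and with the parity lemma above it reduces to the paper's few explicit verifications rather than an induction.
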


\begin{proof}
Note that $n = cl$ with $1\le c \le k < \frac n2$. Thus $l \ge 3$.

Begin with the case of $l\ge 4$ even. Obviously, $\{u_0\} \cup \{v_k : 0\leq k \leq n-1\}\subseteq I^1[\mathcal S]$ by definition. However, also $u_{n-1} \in I^1[\mathcal S]$. To see this, note that $v_{n-1} \in V_{c-1}$, and $(c-1)+ik \equiv n-1 \mbox{ mod } n$ holds iff $i\frac kc \equiv -1 \mbox{ mod } l$. As $l$ is even, the last congruency can only hold for $i$ odd. Thus, $v_{n-1} \in \mathcal S_v \subseteq \mathcal S$ and $u_{n-1} \in I^1[\mathcal S]$.

In addition to $u_{n-1} \in I^1[\mathcal S]$, $u_{1} \in I^1[\mathcal S]$ happens if and only if $c=1$. In any case, another $\frac n2 -1$ steps are necessary for the infection to spread from $I^1[\mathcal S]$ to the rest of the graph.



Turn now to the case of $l \ge 3$ odd. Obviously, \[\{u_j: 0\le j\le c-1\} \cup \{v_{j+ik} : 0< i < l-1, 0\le j\le c-1\}\subseteq I^1[\mathcal S]\] by definition, where some of the $v_{j+ik}$ for $i\in \{0,l-1\}$ will still be missing. In addition, exactly one of $u_c$ and $u_{n-1}$ will be in $I^1[\mathcal S]$. To see this, note that $v_{c} \in V_{0}$, and $i_1k \equiv c \mbox{ mod } n$ holds iff $i_1\frac kc \equiv 1 \mbox{ mod } l$. Similarly,
$v_{n-1} \in V_{c-1}$, and $(c-1)+i_2k \equiv n-1 \mbox{ mod } n$ holds iff $i_2\frac kc \equiv -1 \mbox{ mod } l$. Thus, $i_1\frac kc + i_2\frac kc\equiv 0 \mbox{ mod } l$, and $i_1+i_2$ is a multiple of $l$. With $0<i_1,i_2< l$, we must have $i_1+i_2 = l$ odd. Thus, exactly one of $i_1,i_2$ is odd, exactly one of $v_c, v_{n-1}$ is in $\mathcal S_v \subseteq \mathcal S$, and exactly one of $u_c,u_{n-1}$ is in $I^1[\mathcal S]$. In particular, \[\mbox{$I^1[\mathcal S]$ contains exactly $c+1$ of the points $u_j$. These $c+1$ points are the vertices of a path of length $c$ on the exterior cycle.}\]

Continuing with $v_c$ and $v_{n-1}$, we have $v_c \in I^1[\mathcal S]$ and $u_c \in I^2[\mathcal S]$. To see this, note that $v_c \in I^1[\mathcal S]$ fails iff $i_1 = l-1$. However, $i_1 = l-1$ implies $\frac kc \equiv -1 \mbox{ mod } l$ and $k+c \equiv 0 \mbox{ mod } n$ which is impossible for $1\le c \le k < \frac n2$. A similar argument shows that $u_{n-1} \in I^2[\mathcal S]$ unless $k=c =\gcd(n,k)$, i.e., unless $n$ is a multiple of $k$. Thus, $u_c, u_{n-1} \in I^2[\mathcal S]$ for $k \nmid n$ while a direct inspection shows $u_c, u_{c+1} \in I^2[\mathcal S]$ for $k \mid n$. In any case,
\[\mbox{$I^2[\mathcal S]$ will add at least one new point $u_j$ to $I^1[\mathcal S]$ but not more than two.}\]

For $c\in \{1,2\}$, note $\{v_k : 0\leq k \leq n-1\}\subseteq I^2[\mathcal S]$. If $c=1$, then $n=l$ is odd and the infection will need another $\frac {n+1}2 -2$ steps to spread from $I^2[\mathcal S]$ to the rest of the graph.
If $c=2$, then $n=2l$ is even and the infection will need another $\frac {n}2 -2$ steps to spread from $I^2[\mathcal S]$ to the rest of the graph.

For $c \ge 3$, we have $\{v_k : 0\leq k \leq n-1\}\subseteq I^3[\mathcal S]$. If $k\nmid n$ and $u_c \in I^1[\mathcal S]$, then $u_{c+1}, u_{n-1} \in I^2[\mathcal S]$, $u_{c+2}, u_{n-2} \in I^3[\mathcal S]$, and the infection will need a total of $\frac {n-c}2 +1$ steps to spread from $\mathcal S$ to the rest of the graph. A similar argument is true for $k\nmid n$ and $u_{n-1} \in I^1[\mathcal S]$.
If $k\mid n$, then $u_{c+1} \in I^2[\mathcal S]$, $u_{c+2}, u_{n-1} \in I^3[\mathcal S]$, and the infection will need again a total of $\frac {n-c}2 +1$ steps to spread from $\mathcal S$ to the rest of the graph.
\end{proof}

\begin{lemma}\label{diam_lemma}
	For a cubic graph, $G=(V,E)$, and infecting set $S$, the infecting time is $\lceil \frac{d+1}{2}\rceil$ where $d$ is the largest diameter of a tree of the forest $G[V-S]$.
\end{lemma}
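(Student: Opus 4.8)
The plan is to reduce to a single tree and obtain an exact, vertex-by-vertex formula for when each vertex is infected. By Theorem~\ref{Main Theorem}, $G[V-S]$ is a forest, and any two adjacent vertices of $V-S$ lie in the same component; hence the infection of a vertex is governed entirely by its own tree together with its neighbors in $S$, and the infecting time is the maximum of the infecting times of the individual trees. Since $1+\lfloor d/2\rfloor$ is increasing in $d$, it suffices to fix one tree $T$ of diameter $d$ and prove that its infecting time equals $\lceil\frac{d+1}{2}\rceil = 1+\lfloor d/2\rfloor$. Write $\tau(v)$ for the step at which a vertex $v$ of $T$ is infected.

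First I would establish a formula for $\tau(v)$. For an edge $wv$ of $T$, let $T_w$ be the component of $T-v$ containing $w$ and set $\beta(w,v)=1+h$, where $h$ is the largest distance from $w$ to a leaf of $T$ lying in $T_w$. By induction on $|T_w|$ I would show that, if $v$ is held permanently uninfected, then $w$ is infected exactly at step $\beta(w,v)$: a leaf has two neighbors in $S$ and is infected at step $1$; a degree-two $w$ requires its single $S$-neighbor together with its one child in $T_w$, giving $\beta(\text{child},w)+1$; and a degree-three $w$ requires both children, giving one more than the larger of their two $\beta$-values. In every case this equals $1+h$. Setting $\beta(s,v)=0$ for $s\in S$, I then claim $\tau(v)=1+m(v)$, where $m(v)$ denotes the second-smallest (equivalently the median, or second-largest) of the three numbers $\beta(w,v)$ as $w$ ranges over the neighbors of $v$ in $G$. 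The reason is that up to step $\tau(v)$ the vertex $v$ has not yet contributed to infecting anyone, so each neighbor is infected precisely at its own $\beta$-time, and $v$ is infected one step after its second neighbor becomes infected.

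It then remains to prove $\max_{v\in T}m(v)=\lfloor d/2\rfloor$. Here $m(v)$ is the second-largest of the branch lengths $\ell_w(v)$, the maximum distance from $v$ to a leaf along a path whose first step is to $w$ (an $S$-branch contributing $0$). For the upper bound, if $v$ has at least two branches inside $T$ then the two longest of them join into a path through $v$ of length at least $2m(v)$, which cannot exceed $d$, so $m(v)\le\lfloor d/2\rfloor$; a leaf has $m(v)=0$. For the lower bound, choose a diameter path $w_0w_1\cdots w_d$ (its endpoints are necessarily leaves) and take $v=w_{\lfloor d/2\rfloor}$: the branch toward $w_0$ has length at least $\lfloor d/2\rfloor$ and the branch toward $w_d$ has length at least $\lceil d/2\rceil$, so at least two branches of $v$ have length $\ge\lfloor d/2\rfloor$ and thus $m(v)\ge\lfloor d/2\rfloor$. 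Combining the two bounds gives $\max_v\tau(v)=1+\lfloor d/2\rfloor=\lceil\frac{d+1}{2}\rceil$, and maximizing over the trees of $G[V-S]$ finishes the proof.

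I expect the main obstacle to be the rigorous justification of $\tau(v)=1+m(v)$, and in particular the ``no acceleration'' phenomenon underlying both the induction for $\beta$ and the final formula: one must argue that in a tree a vertex can never be infected before infection has already reached it from all of the subtree-branches it depends on, so that a vertex never speeds up the infection of its own descendants. Once this monotonicity is in place, the remaining diameter computation is routine tree combinatorics.
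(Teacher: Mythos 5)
Your argument is correct and is in essence the paper's own (two-line) proof carried out in full: the paper simply observes that, since $G$ is cubic and a vertex of $V-S$ has no neighbors in other components of the forest, the infection peels each tree from its leaves inward, and your blocked-process induction for $\beta(w,v)$, the median formula $\tau(v)=1+m(v)$, and the computation $\max_v m(v)=\lfloor d/2\rfloor$ supply exactly the details --- including the ``no acceleration'' monotonicity point --- that the paper leaves implicit. Indeed, your per-vertex time $1+m(v)$ is precisely the round in which $v$ is deleted under iterated leaf removal, so the two proofs describe the same mechanism, with yours rigorously quantified.
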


\begin{proof}
	From Theorem \ref{Main Theorem}, $G[V-S]$ is a forest. The result follows by observing that since $G$ is cubic, the infection will first infect the leaves of $G[V-S]$ and progress in a similar fashion iteratively.
\end{proof}

In general, determining the minimal or maximal infecting time of a minimum size infecting set for $G(n,k)$ is very difficult. We give next full results for the special case of $G(n,1)$, including an exhaustive list of all possible infecting times. By Lemma \ref{diam_lemma}, it suffices to calculate the maximal diameter for the connected components of the complement of a minimum size infecting set. We give these maximal diameters below as it is finer information.

\begin{theorem}\label{full_time}
	For $G(n,1)$, the set of possible maximal diameters of connected components for the complement of a minimum size infecting set $S$ is
	\[ \bigg\{n, n+2, n+4, \ldots, n +2 \bigg\lfloor\frac{n-3}{4}\bigg\rfloor \bigg\} \]
	when $n\ge 3$ is odd and
	\[ \bigg\{d : \bigg\lfloor \frac{n}{2} \bigg\rfloor \leq d \leq \frac 32 n -2 \text{ with }d \not= n-1 \bigg\} \]
	when $n\ge 4$ is even.
\end{theorem}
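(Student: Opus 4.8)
The plan is to encode each minimum size infecting set $S$ by a cyclic word recording, for each position $i\in\{0,\dots,n-1\}$, which of the two vertices $u_i,v_i$ survive in the forest $F=G[V-S]$ (a forest by Theorem~\ref{Main Theorem}). Call position $i$ \emph{full} if both $u_i,v_i\in V-S$, \emph{single} if exactly one survives (recording its rail, $u$ or $v$), and \emph{empty} if neither survives. Writing $f,s,e$ for the numbers of full, single, and empty positions, the identities $f+s+e=n$ and $2f+s=2n-|S|$ give $f-e=n-|S|$. Since two adjacent full positions already span the $4$-cycle $u_iu_{i+1}v_{i+1}v_i$, the full positions form an independent set in the cycle $C_n$ on positions; as $\alpha(C_n)=\lfloor n/2\rfloor$, this forces $e=0$ when $n$ is odd and $e\in\{0,1\}$ when $n$ is even. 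Theorem~\ref{components thm} then records the number of components of $F$ (one when $n$ is odd; one or two when $n$ is even, according to whether $S$ has a neighboring pair), and the object to be determined is the largest component diameter $d$, since by Lemma~\ref{diam_lemma} this controls the infecting time.

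I would settle the odd case $n=2m+1$ first. Here $e=0$ and $f=m=\alpha(C_n)$, so the full positions form a \emph{maximum} independent set; the word must alternate single, full, single, full, $\dots$ around the cycle, with exactly one place carrying two consecutive singles. Cutting there lines the positions up as $m+1$ singles alternating with $m$ interior full positions, and connectivity shows there is exactly one ``cut'' (adjacent positions joined by no surviving edge), consistent with $F$ being a single tree. A full position between two singles contributes a pendant leaf when the flanking rails agree, and otherwise the spine ``crosses over'' through the rung; hence $F$ is a caterpillar whose spine runs end to end, and its $3m+1-b$ spine vertices give $d=3m-b$, where $b$ counts full positions with equal flanking rails. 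Reading the rails as a binary string $r_0r_2\cdots r_{2m}$, acyclicity forces $r_0\neq r_{2m}$, so the number of rail-switches is odd and $b\equiv m-1\pmod 2$. Thus $d$ is always odd and, as the string varies, sweeps the arithmetic progression from $n=2m+1$ (maximal $b$) up to $3m$ or $3m-1$ according to the parity of $m$, which is exactly $n+2\lfloor(n-3)/4\rfloor$; every intermediate value is realized by prescribing the appropriate odd number of switches.

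The even case $n=2m$ proceeds along the same lines but splits according to $e\in\{0,1\}$ and, via Theorem~\ref{components thm}, according to whether $F$ has one or two components. Now the full positions number $m-1$ or $m$ and no longer alternate rigidly, so the word carries two ``extra'' singles (or one empty position) whose placement governs the number of cuts and the split of the $3m-1$ surviving vertices among components. In each configuration I would again read off $d$ as a spine length minus a pendant count, taken over both components, and verify that as the word varies $d$ sweeps the interval $[\lfloor n/2\rfloor,\tfrac32 n-2]$: the upper end $\tfrac32 n-2=3m-2$ comes from the single-path (one-component) configuration and the lower end $\lfloor n/2\rfloor=m$ from the most balanced two-component split. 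Unlike the odd case, the bipartiteness of the prism no longer pins a single parity, because the two-component and single-empty-column configurations contribute spines of both parities, so the generic values of both parities occur.

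The main obstacle is the even case, and specifically the excluded value $d=n-1$. Producing sets for the generic values in the interval and bounding $d\le 3m-2$ from above are routine once the word framework is in place, but showing that \emph{no} minimum size infecting set yields largest component diameter exactly $n-1$ requires care: one must argue that every word whose spine-and-pendant bookkeeping would produce $n-1$ is forced—by the independent-set constraint on full positions together with the component count of Theorem~\ref{components thm}—to collapse to $n-2$, to $n$, or to a differently balanced two-component forest whose larger diameter avoids $n-1$. I expect this obstruction for the single omitted value to be the most delicate part of the argument, while the remaining bounds and the entire odd case follow directly from the column-word analysis above.
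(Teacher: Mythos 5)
Your column-word encoding is, at bottom, the same argument the paper gives (cut the annulus at a face whose two horizontal edges are dead, read the complement as a caterpillar, count rail switches), and your odd case is complete and correct --- in fact it makes the paper's informal ``weaving'' rigorous: the count $f-e=n-|S|$ together with independence of the full columns forces $e=0$ and the alternating word with one double-single gap; the unique dead face must be that single-single face with differing rails; and $d=3m-b$ with the switch count $m-b$ odd yields exactly $\{n,n+2,\dots,n+2\lfloor(n-3)/4\rfloor\}$, matching the statement.

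The genuine gap is the even case, which you leave as a plan, and you mislocate where its difficulty lies. The exclusion of $d=n-1$ is not a ``delicate collapse'' of individual words; it is a union-of-ranges phenomenon across the three configurations your framework already isolates. (a) One component with $e=1$: the empty column is a rung pair $u_0,v_0\in S$, the remaining $2m-1$ columns are forced to alternate full/single with full columns at both ends, so the spine terminates in rungs at both ends and there is no end-rail constraint; counting gives $d=n+c$ with $c$ the number of interior crossovers, $0\le c\le m-2$, so $d$ sweeps $n,n+1,\dots,\frac{3}{2}n-2$ in steps of $1$. (b) One component with $e=0$ (the neighboring pair of $S$ is a same-rail adjacent pair): your own parity argument from the odd case applies verbatim --- the spine crosses all $n$ columns and the rails at the unique dead face differ, so the crossover count $c$ is odd, giving $d=n-1+c\in\{n,n+2,\dots\}$. (c) Two components with $e=0$: the two dead faces cut the annulus into two arcs and the larger tree satisfies $\lfloor n/2\rfloor\le d\le n-2$. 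Since (a) and (b) force $d\ge n$ while (c) forces $d\le n-2$, the value $n-1$ simply lies in none of the three ranges; there is no per-word obstruction to find. What your proposal still owes is precisely this bookkeeping: the step-$1$ realizability in (a), the odd-parity lower bound $d\ge n$ in (b), the bounds $\lfloor n/2\rfloor$ and $n-2$ in (c) together with realizability of every intermediate value there, all of which is finite checking in your framework but, as written, is asserted rather than proved --- and with it the theorem's most distinctive feature, the omitted value $n-1$.
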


\begin{proof}
	
	Begin by viewing $G=G(n,1)$ as a discrete annulus as in Figure \ref{fig:ladder}.
		
	\begin{figure}[H]
		\centering
		\begin{tikzpicture}
			\tikzstyle{hollow node}=[draw,circle, fill=white, outer sep=-4pt,inner sep=0pt,
			minimum width=8pt, above]
			\foreach \i in {0,...,5,7,8}{
				\foreach \j in {0,1}{
					\draw[black,thick] (\i,\j) -- (\i+1,\j);
				}
			}
			\foreach \i in {0,...,9}{
				\draw[black,thick] (\i,0) -- (\i,1);
			}
			\draw[black,thick] (6,0) -- (6.25,0);
			\draw[black,thick] (6,1) -- (6.25,1);
			\draw[black,thick] (6.75,0) -- (7,0);
			\draw[black,thick] (6.75,1) -- (7,1);
			\draw[black,thick] (-.4,0) -- (0,0);
			\draw[black,thick] (-.4,1) -- (0,1);
			\draw[black,thick] (9,1) -- (9.4,1);
			\draw[black,thick] (9,0) -- (9.4,0);
			
			\foreach \i in {0,1,2}{
				\foreach \j in {0,1}{
					\draw  node[fill,circle,inner sep=0pt,minimum size=2pt] at (6.36 + .14*\i,\j){};
					\draw  node[fill,circle,inner sep=0pt,minimum size=2pt] at (9.6 + .14*\i,\j){};
					\draw  node[fill,circle,inner sep=0pt,minimum size=2pt] at (-.6 - .14*\i,\j){};
				}
			}
			\foreach \i in {0,...,9}{
				\node[hollow node] (\i0) at (\i,0){};
				\node[hollow node] (\i1) at (\i,1){};
			}
			\foreach \i in {0,...,6}{
				\node at (\i,-.4){$v_{\i}$};
				\node at (\i,1.4){$u_{\i}$};
			}
			\foreach \i in {1,2,3}{
				\node at (10-\i,-.4){$v_{n-\i}$};
				\node at (10-\i,1.4){$u_{n-\i}$};
			}
		\end{tikzpicture}
		\caption{}
		\label{fig:ladder}
	\end{figure}
	
	Start with the case where $n$ is odd ($n\geq 3$) and write $n=2k+1$.
	By Corollary \ref{p3_hull_GP} and Theorem \ref{components thm}, every minimum size infecting set, $S$, contains $k+1$ vertices, the remaining vertices constitute a tree, $T$, and no two vertices of $S$ are neighbors. To prevent a cycle from appearing in $T$, there must exist a (square) face of the annulus whose top and bottom edges both are not in $T$. From this it follows, possibly after turning the annulus over, that there is a face with exactly two vertices of $S$ configured as in Figure~\ref{fig:1}, where marked points belong to $S$.

\begin{figure}[H]
	\centering
	\begin{tikzpicture}
		\tikzstyle{hollow node}=[draw,circle, fill=white, outer sep=-4pt,inner sep=0pt,
		minimum width=8pt, above]
		\foreach \i in {0,...,0}{
			\foreach \j in {0,1}{
				\draw[black,thick] (\i,\j) -- (\i+1,\j);
			}
		}
		\foreach \i in {0,...,1}{
			\draw[black,thick] (\i,0) -- (\i,1);
		}
		\draw[black,thick] (-.4,0) -- (0,0);
		\draw[black,thick] (-.4,1) -- (0,1);
	
	\draw[black,thick] (1.4,0) -- (1,0);
	\draw[black,thick] (1.4,1) -- (1,1);
		
		\foreach \i in {0,1,2}{
			\foreach \j in {0,1}{
				\draw  node[fill,circle,inner sep=0pt,minimum size=2pt] at (1.56 + .14*\i,\j){};
				\draw  node[fill,circle,inner sep=0pt,minimum size=2pt] at (-.6 - .14*\i,\j){};
			}
		}
		\foreach \i in {0,1}{
			\node[hollow node] (\i0) at (\i,0){};
			\node[hollow node] (\i1) at (\i,1){};
		}

	\tikzstyle{red node}=[draw,circle, fill=red, outer sep=-4pt,inner sep=0pt, minimum width=8pt, above]
	
	\node[red node] at (0,0){};
	\node[red node] at (1,1){};

	\end{tikzpicture}
	\caption{}
	\label{fig:1}
\end{figure}

	After relabeling, the top left vertex can be labeled as $u_{n-1}$ and the bottom right as $v_0$. Cut the annulus now through the middle of this square and unfold it. The tree $T$ must connect $v_0$ to $u_{2k}$ and is forced to contain $u_1,v_1,u_{2k-1},$ and $v_{2k-1}$, see Figure \ref{fig:ladder}. Recalling that $S$ has no neighboring vertices, to prevent $T$ from having a cycle around the face with indices $1$ and $2$, $S$ must contain exactly one element of $\{u_2,v_2\}$. Arguing iteratively, it follows that each set $\{u_{2i}, v_{2i}\}$, $0\leq i \leq k$ contains exactly one element of $S$.
	
	The shortest diameter for such a tree $T$ is pictured in Figure \ref{fig:narrowest odd tree}. It has diameter $n$ and, by Lemma \ref{diam_lemma}, a corresponding infecting time of $\lceil \frac{n+1}{2} \rceil$.

	\begin{figure}[H]
		\centering
		\begin{tikzpicture}
			\tikzstyle{hollow node}=[draw,circle, fill=white, outer sep=-4pt,inner sep=0pt,
			minimum width=8pt, above]
			\tikzstyle{red node}=[draw,circle, fill=red, outer sep=-4pt,inner sep=0pt,
			minimum width=8pt, above]
			\foreach \i in {0,...,5,7,8,9}{
				\foreach \j in {0,1}{
					\draw[black,thick] (\i,\j) -- (\i+1,\j);
				}
			}
			\foreach \i in {0,...,10}{
				\draw[black,thick] (\i,0) -- (\i,1);
			}
			\draw[black,thick] (6,0) -- (6.25,0);
			\draw[dark green, line width = 4pt] (6,1) -- (6.25,1);
			\draw[black,thick] (6.75,0) -- (7,0);
			\draw[dark green, line width = 4pt] (6.75,1) -- (7,1);
			\draw[black,thick] (-.4,0) -- (0,0);
			\draw[black,thick] (-.4,1) -- (0,1);
			\draw[black,thick] (10,1) -- (10.4,1);
			\draw[black,thick] (10,0) -- (10.4,0);
			
			\foreach \i in {1,...,5,7,8,9}{
				\draw[dark green, line width = 4pt](\i,1) -- (\i+1,1);
			}
			\draw[dark green, line width = 4pt](0,0) -- (1,0);
			\draw[dark green, line width = 4pt](1,0) -- (1,1);
			\foreach \i in {0,1,2}{
				\foreach \j in {0,1}{
					\draw  node[fill,circle,inner sep=0pt,minimum size=2pt] at (6.36 + .14*\i,\j){};
					\draw  node[fill,circle,inner sep=0pt,minimum size=2pt] at (10.6 + .14*\i,\j){};
					\draw  node[fill,circle,inner sep=0pt,minimum size=2pt] at (-.6 - .14*\i,\j){};
				}
			}
			\foreach \i in {3,5,7,9}{
				\draw[dark green, line width = 4pt](\i,0) -- (\i,1);}
			
			\foreach \i in {0,...,10}{
				\node[hollow node] (\i0) at (\i,0){};
				\node[hollow node] (\i1) at (\i,1){};
			}
			\foreach \i in {0,...,6}{
				\node at (\i,-.4){$v_{\i}$};
				\node at (\i,1.4){$u_{\i}$};
			}
			\foreach \i in {1,2,3}{
				\node at (10-\i,-.4){$v_{2k-\i}$};
				\node at (10-\i,1.4){$u_{2k-\i}$};
			}
			\node at (10,-.4){$v_{2k}$};
			\node at (10,1.4){$u_{2k}$};
			\node[red node] at (01){};
			\foreach \i in {2,4,6,8,10}{
				\node[red node] at (\i0){};
			}
			
		\end{tikzpicture}
		\caption{Minimal Diameter Infecting Set, I}
		\label{fig:narrowest odd tree}
		
	\end{figure}
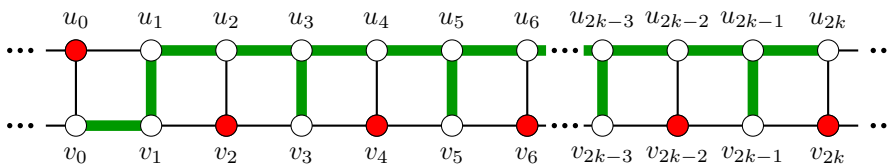
	
	Larger diameters may be obtained by ``weaving in and out.'' For example, exchanging $v_4$ for $u_4$ in Figure \ref{fig:narrowest odd tree} to ``weave around'' $u_4$ gives a diameter of $n+2$ (see Figure \ref{fig:middle odd tree}). Additional bumps may be added and each increases the diameter of $T$ by $2$, to a maximum diameter of $n+2\lfloor\frac{n-3}{4}\rfloor$ (see Figure \ref{fig:widest odd tree} for $k$ odd).
	
	\begin{figure}[H]
		\centering
		\begin{tikzpicture}
			\tikzstyle{hollow node}=[draw,circle, fill=white, outer sep=-4pt,inner sep=0pt,
			minimum width=8pt, above]
			\tikzstyle{red node}=[draw,circle, fill=red, outer sep=-4pt,inner sep=0pt,
			minimum width=8pt, above]
			\foreach \i in {0,...,5,7,8,9}{
				\foreach \j in {0,1}{
					\draw[black,thick] (\i,\j) -- (\i+1,\j);
				}
			}
			\foreach \i in {0,...,10}{
				\draw[black,thick] (\i,0) -- (\i,1);
			}
			\draw[black,thick] (6,0) -- (6.25,0);
			\draw[dark green, line width = 4pt] (6,1) -- (6.25,1);
			\draw[black,thick] (6.75,0) -- (7,0);
			\draw[dark green, line width = 4pt] (6.75,1) -- (7,1);
			\draw[black,thick] (-.4,0) -- (0,0);
			\draw[black,thick] (-.4,1) -- (0,1);
			\draw[black,thick] (10,1) -- (10.4,1);
			\draw[black,thick] (10,0) -- (10.4,0);
			
			\foreach \i in {1,2,5,7,8,9}{
				\draw[dark green, line width = 4pt](\i,1) -- (\i+1,1);
			}
			\foreach \i in {0,3,4}{
				\draw[dark green, line width = 4pt](\i,0) -- (\i+1,0);}
			
			\foreach \i in {0,1,2}{
				\foreach \j in {0,1}{
					\draw  node[fill,circle,inner sep=0pt,minimum size=2pt] at (6.36 + .14*\i,\j){};
					\draw  node[fill,circle,inner sep=0pt,minimum size=2pt] at (10.6 + .14*\i,\j){};
					\draw  node[fill,circle,inner sep=0pt,minimum size=2pt] at (-.6 - .14*\i,\j){};
				}
			}
			\foreach \i in {1,3,5,7,9}{
				\draw[dark green, line width = 4pt](\i,0) -- (\i,1);}
			
			\foreach \i in {0,...,10}{
				\node[hollow node] (\i0) at (\i,0){};
				\node[hollow node] (\i1) at (\i,1){};
			}
			\foreach \i in {0,...,6}{
				\node at (\i,-.4){$v_{\i}$};
				\node at (\i,1.4){$u_{\i}$};
			}
			\foreach \i in {1,2,3}{
				\node at (10-\i,-.4){$v_{2k-\i}$};
				\node at (10-\i,1.4){$u_{2k-\i}$};
			}
			\node at (10,-.4){$v_{2k}$};
			\node at (10,1.4){$u_{2k}$};
			\node[red node] at (01){};
			\foreach \i in {2,6,8,10}{
				\node[red node] at (\i0){};
			}
			\node[red node] at (41){};
			
		\end{tikzpicture}
		\caption{Increasing the Diameter by $2$}
		\label{fig:middle odd tree}
		
	\end{figure}
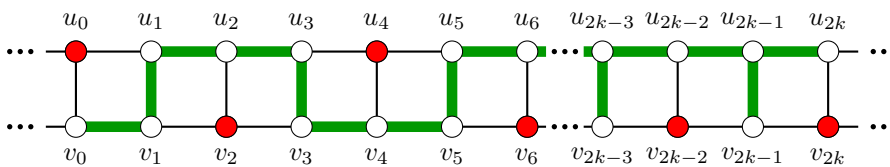

	\begin{figure}[H]
		\centering
		\begin{tikzpicture}
			\tikzstyle{hollow node}=[draw,circle, fill=white, outer sep=-4pt,inner sep=0pt,
			minimum width=8pt, above]
			\tikzstyle{red node}=[draw,circle, fill=red, outer sep=-4pt,inner sep=0pt,
			minimum width=8pt, above]
			\foreach \i in {0,...,5,7,8,9}{
				\foreach \j in {0,1}{
					\draw[black,thick] (\i,\j) -- (\i+1,\j);
				}
			}
			\foreach \i in {0,...,10}{
				\draw[black,thick] (\i,0) -- (\i,1);
			}
			\draw[black,thick] (6,0) -- (6.25,0);
			\draw[dark green, line width = 4pt] (6,1) -- (6.25,1);
			\draw[black,thick] (6.75,0) -- (7,0);
			\draw[dark green, line width = 4pt] (6.75,1) -- (7,1);
			\draw[black,thick] (-.4,0) -- (0,0);
			\draw[black,thick] (-.4,1) -- (0,1);
			\draw[black,thick] (10,1) -- (10.4,1);
			\draw[black,thick] (10,0) -- (10.4,0);
			
			\foreach \i in {1,2,5,9}{
				\draw[dark green, line width = 4pt](\i,1) -- (\i+1,1);
			}
			\foreach \i in {0,3,4,7,8}{
				\draw[dark green, line width = 4pt](\i,0) -- (\i+1,0);}
			
			\foreach \i in {0,1,2}{
				\foreach \j in {0,1}{
					\draw  node[fill,circle,inner sep=0pt,minimum size=2pt] at (6.36 + .14*\i,\j){};
					\draw  node[fill,circle,inner sep=0pt,minimum size=2pt] at (10.6 + .14*\i,\j){};
					\draw  node[fill,circle,inner sep=0pt,minimum size=2pt] at (-.6 - .14*\i,\j){};
				}
			}
			\foreach \i in {1,3,5,7,9}{
				\draw[dark green, line width = 4pt](\i,0) -- (\i,1);}
			
			\foreach \i in {0,...,10}{
				\node[hollow node] (\i0) at (\i,0){};
				\node[hollow node] (\i1) at (\i,1){};
			}
			\foreach \i in {0,...,6}{
				\node at (\i,-.4){$v_{\i}$};
				\node at (\i,1.4){$u_{\i}$};
			}
			\foreach \i in {1,2,3}{
				\node at (10-\i,-.4){$v_{2k-\i}$};
				\node at (10-\i,1.4){$u_{2k-\i}$};
			}
			\node at (10,-.4){$v_{2k}$};
			\node at (10,1.4){$u_{2k}$};
			\node[red node] at (01){};
			\foreach \i in {2,6,10}{
				\node[red node] at (\i0){};
			}
			\node[red node] at (41){};
			\node[red node] at (81){};

		\end{tikzpicture}
		\caption{Maximal Diameter Infecting Set, $k$ Odd}
		\label{fig:widest odd tree}
		
	\end{figure}
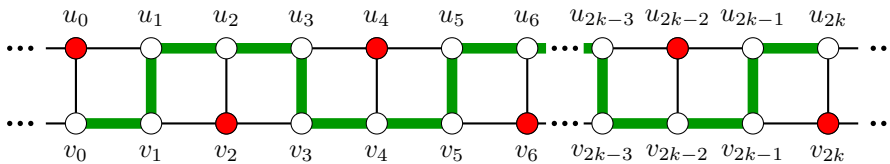

	Turn now to the case of $n$ even and write $n = 2k$ $(n\geq4)$. Again a minimum size infecting set $S$ has $k+1$ vertices. This case has several subcases, but as each is handled similarly to the previous case, we only sketch the major steps.
	
	By Theorem \ref{components thm}, $G[V-S]$ may be a forest with one or two connected components. Consider first the case where it is a single tree. In this case there now exists precisely one pair of neighboring vertices in $S$. We can still find a square face $F$ of the annulus with both horizontal edges not in $T$. As before, split the annulus at this face. There will be two subcases to examine.	
	
	The first subcase is when $F$ has the same form as Figure \ref{fig:1}.  The minimal diameter tree will look similar to Figure \ref{fig:narrowest odd tree}, except that $T$ will connect $v_1$ to $v_2$ before moving up to $u_2$ and continuing horizontally, and the indices will have a maximal label of $2k-1$. Weaving will increase the diameter by $2$ and the possible diameters of $T$ are $n, n+2, \ldots, n+ 2\lfloor\frac{n-4}{4}\rfloor$.

	The second subcase is when the neighboring vertices of $F$ are, after relabeling, $u_0$ and $v_0$. In this case, the tree $T$ with minimal diameter is pictured in Figure
	\ref{fig:narrowest even tree 2}. Weaving allows incrementing the diameter by $1$ and the possible diameters of $T$ are $n, n+1, \ldots, \frac 32 n -2$.
	
%
	
	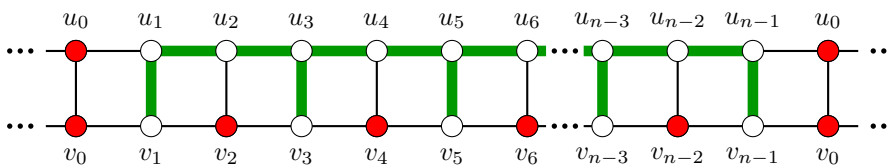
\begin{figure}[H]
		\centering
		\begin{tikzpicture}
			\tikzstyle{hollow node}=[draw,circle, fill=white, outer sep=-4pt,inner sep=0pt,
			minimum width=8pt, above]
			\tikzstyle{red node}=[draw,circle, fill=red, outer sep=-4pt,inner sep=0pt,
			minimum width=8pt, above]
			\foreach \i in {0,...,5,7,8,9}{
				\foreach \j in {0,1}{
					\draw[black,thick] (\i,\j) -- (\i+1,\j);
				}
			}
			\foreach \i in {0,...,10}{
				\draw[black,thick] (\i,0) -- (\i,1);
			}
			\draw[black,thick] (6,0) -- (6.25,0);
			\draw[dark green, line width = 4pt] (6,1) -- (6.25,1);
			\draw[black,thick] (6.75,0) -- (7,0);
			\draw[dark green, line width = 4pt] (6.75,1) -- (7,1);
			\draw[black,thick] (-.4,0) -- (0,0);
			\draw[black,thick] (-.4,1) -- (0,1);
			\draw[black,thick] (10,1) -- (10.4,1);
			\draw[black,thick] (10,0) -- (10.4,0);
			
			\foreach \i in {0,1,2}{
				\foreach \j in {0,1}{
					\draw  node[fill,circle,inner sep=0pt,minimum size=2pt] at (6.36 + .14*\i,\j){};
					\draw  node[fill,circle,inner sep=0pt,minimum size=2pt] at (10.6 + .14*\i,\j){};
					\draw  node[fill,circle,inner sep=0pt,minimum size=2pt] at (-.6 - .14*\i,\j){};
				}
			}
			
			\foreach \i in {1,...,5,7,8}{
				\draw[dark green, line width = 4pt](\i,1) -- (\i+1,1);
			}
			
			\foreach \i in {1,3,5,7,9}{
				\draw[dark green, line width = 4pt](\i,0) -- (\i,1);
			}

			\foreach \i in {0,...,10}{
				\node[hollow node] (\i0) at (\i,0){};
				\node[hollow node] (\i1) at (\i,1){};
			}
			\foreach \i in {0,...,6}{
				\node at (\i,-.4){$v_{\i}$};
				\node at (\i,1.4){$u_{\i}$};
			}
			\foreach \i in {1,2,3}{
				\node at (10-\i,-.4){$v_{n-\i}$};
				\node at (10-\i,1.4){$u_{n-\i}$};
			}
				\node at (10,-.4){$v_{0}$};
				\node at (10,1.4){$u_{0}$};
			
			\foreach \i in {2,4,6,8}{
				\node[red node] at (\i,0){};
			}
			\foreach \i in {0,1}{
				\node[red node] at (0,\i){};
				\node[red node] at (10,\i){};
			}
			
		\end{tikzpicture}
		\caption{Minimal Diameter Infecting Set, II}
		\label{fig:narrowest even tree 2}
	\end{figure}
	
	The final case is when $G[V-S]$ constitutes $2$ connected components and $S$ has no neighboring vertices. The infecting time is controlled by the diameter of the larger tree. The analysis is similar, but now the annulus may be cut in two places. After relabeling, there are two possibilities. The first is that $S$ contains $\{u_0,u_m, v_{m+1}, v_{2k-1}\}$ and the first tree connects $v_0$ to $v_m$ and the second tree connects $u_{m+1}$ to $u_{2k-1}$. The other is that $S$ contains $\{u_0,v_m, u_{m+1}, v_{2k-1}\}$ and the first tree connects $v_0$ to $u_m$ and the second tree connects $v_{m+1}$ to $u_{2k-1}$. A direct inspection shows that the smallest possible diameter for the larger tree is $\lfloor \frac{n}{2} \rfloor$. Weaving allows incrementing the diameter by $1$ and the possible diameters for the larger tree are $\lfloor \frac{n}{2} \rfloor, \lfloor \frac{n}{2} \rfloor +1, \ldots, n-2$.
	
	We now summarize the possible maximal diameters of connected components for the complement of a minimum size infecting set $S$ for the case of $n$ even. The diameter $d$ may be achieved for each $d$ satisfying
	$\lfloor \frac{n}{2} \rfloor \leq d \leq \frac 32 n -2$ with $d \not= n-1$.	
\end{proof}

\section{Further Results}

In this section, we examine some closely related variants of $G(n,k)$.

When working with two copies of $G(n,k)$, we will write $\{u_k',v_k':0\leq k \leq n-1\}$ for the second set of vertices. We then write $G(n,k)\#G(n,k)$ for the graph obtained by surgery on the two copies of $G(n,k)$ along the edges $u_0 u_1$ and $u_0' u_1'$. By this we mean that $G(n,k)\#G(n,k)$ is the union of the two graphs with the edges $u_0 u_1$ and $u_0' u_1'$ replaced by $u_0 u_0'$ and $u_1 u_1'$. The following result calculates the $P_3$-hull number of $G(n,k)\#G(n,k)$. In fact, the result generalizes to most types of surgery along other edges.

\begin{theorem}\label{thm_surgery}
 $h_{P_3}(G(n,k)\#G(n,k))=n+1$.
\end{theorem}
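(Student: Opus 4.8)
The plan is to work entirely with decycling sets. Since $H := G(n,k)\#G(n,k)$ is cubic (the surgery deletes two edges and adds two, preserving $3$-regularity), Theorem~\ref{Main Theorem} reduces the claim to $\nabla(H)=n+1$. Note $H$ is cubic with $|V|=4n$ and $|E|=6n$. For the lower bound I would simply repeat the counting argument of Theorem~\ref{components thm}: if $S$ is a decycling set, then $H[V-S]$ is a forest on $4n-|S|$ vertices and so has at most $4n-|S|-1$ edges, while passing from $H$ to $H[V-S]$ deletes at most $3|S|$ edges and thus leaves at least $6n-3|S|$. Combining gives $6n-3|S|\le 4n-|S|-1$, i.e. $|S|\ge n+\tfrac12$, hence $|S|\ge n+1$.

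For the upper bound I would exhibit a decycling set $S=S_A\cup S_B$ of size $n+1$, where $S_A$ lies in the first copy $A$ and $S_B$ in the second copy $B$. The key structural observation is that the two surgery edges $u_0u_0'$ and $u_1u_1'$ form an edge cut separating $A$ from $B$, so every cycle of $H$ is either contained in $A$, contained in $B$, or uses \emph{both} surgery edges (and hence passes through $u_0,u_1,u_0',u_1'$). Moreover the edges $u_0u_1$ and $u_0'u_1'$ are no longer present, so the cycles contained in $A$ are exactly those of $A^-:=A-u_0u_1$, and similarly for $B^-:=B-u_0'u_1'$. Consequently it suffices to choose $S_A$ so that it decycles $A^-$ \emph{and} contains $u_0$, and to choose $S_B$ so that it decycles $B^-$: the first condition kills all cycles inside $A$, the second kills all cycles inside $B$, and $u_0\in S_A$ kills every crossing cycle (since any such cycle would have to pass through the now-deleted vertex $u_0$).

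For $S_A$ I would take the minimum size infecting set of Corollary~\ref{infecting_set} applied to $A$; it decycles $A$ (hence $A^-$), has cardinality $\lceil\frac{n+1}{2}\rceil$, and always contains $u_0$ (for $l$ even $\mathcal S_u=\{u_0\}$, and for $l$ odd $u_0\in\mathcal S_u$ since $0$ is even). For $S_B$ I need a decycling set of $B^-$ of size $\lceil\frac n2\rceil$; since $\lceil\frac{n+1}{2}\rceil+\lceil\frac n2\rceil=n+1$ for all $n$, this yields the desired total. When $n$ is odd, $\lceil\frac n2\rceil=\frac{n+1}{2}=\nabla(G(n,k))$, so I can again take the set of Corollary~\ref{infecting_set} for $B$. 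When $n$ is even and $l=n/\gcd(n,k)$ is even, dropping the single outer vertex from Corollary~\ref{infecting_set} leaves just the alternating inner set $\mathcal S_v$ of size $\frac n2$, and because the outer cycle of $B^-$ is already broken into a path the surviving inner vertices become pendants, so $B^--\mathcal S_v$ is a forest.

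The main obstacle is the remaining case, $n$ even with $l$ odd (so $\gcd(n,k)$ is even): here I must produce a decycling set of $B^-$ of size exactly $\frac n2$, which is \emph{one less} than $\nabla(G(n,k))$, and the clean alternating pattern fails on the odd inner cycles. I expect to handle this by a tailored choice that mixes inner and outer vertices (for instance, $\{v_3',v_4',u_2'\}$ works for $G(6,2)$), removing $(l-1)/2$ vertices from each odd inner $l$-cycle and compensating with a small number of spoke-adjacent outer vertices to destroy the pentagon-type cycles that otherwise survive; verifying that such a choice decycles $B^-$ in full generality, uniformly in $k$, is the delicate step. Once $S_B$ is in hand, the assembly above gives $|S|=n+1$ with $H[V-S]$ a forest, and Theorem~\ref{Main Theorem} upgrades this to $h_{P_3}(H)=\nabla(H)=n+1$.
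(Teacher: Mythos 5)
Your framework is sound, and two pieces of it are actually more self-contained than the paper's argument: your counting lower bound ($6n-3|S|\le 4n-|S|-1$, so $|S|\ge n+1$) replaces the paper's citation of a Betti-deficiency result, and your reduction via the edge cut $\{u_0u_0',\,u_1u_1'\}$ --- decycle $A^-$ with $u_0\in S_A$, decycle $B^-$ with $|S_B|=\lceil\frac n2\rceil$ --- is correct, as are your constructions for $n$ odd and for $n$ even with $l$ even. But the theorem is not proved: the case $n$ even, $l$ odd is exactly where the content lies (it is the only case in which $S_B$ must beat $\nabla(G(n,k))$ by one on $B^-$), and you leave it as a conjecture supported by the single example $G(6,2)$. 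A statement of the form ``I expect to handle this by a tailored choice \ldots verifying in full generality is the delicate step'' is a flagged gap, not an argument.

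The idea you are missing is a relabeling trick. Since $G(n,k)$ has the rotational automorphism $u_i\mapsto u_{i+1}$, $v_i\mapsto v_{i+1}$, you may assume the surgery is performed along $u_{c-1}u_c$ and $u'_{c-1}u'_c$ rather than $u_0u_1$; this is what the paper does, and it places the cut adjacent to the vertex $u_{c-1}$ that the set of Corollary \ref{infecting_set} would otherwise need. With $l$ odd and $c$ even, take $S_B=\mathcal S_v'\cup\{u'_0,u'_2,\ldots,u'_{c-2}\}$, i.e.\ the Corollary \ref{infecting_set} set with $u'_{c-1}$ deleted, of size $c\frac{l-1}{2}+\frac c2=\frac n2$. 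Deleting $S_B$ from $B-u'_{c-1}u'_c$ leaves: the outer cycle degenerated into singletons $u'_1,u'_3,\ldots,u'_{c-1}$ (note $u'_{c-1}$ is isolated on the outside because the surgery removed $u'_{c-1}u'_c$) together with the path $u'_c\cdots u'_{n-1}$; and, from each odd inner cycle $V_j$, only the isolated vertices $v'_{j+ik}$ ($0<i<l-1$ even), which hang as pendants off the long path, plus the single surviving inner edge $v'_jv'_{j+(l-1)k}$. Each such edge attaches to the long outer path at most once (its endpoint $v'_j$ with $j\le c-1$ reaches only the removed vertex $u'_j$ or a singleton), so the remnant is a forest, which is the general verification your proposal lacks. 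By contrast, with your cut fixed at $u_0u_1$, dropping a vertex of $\mathcal S_u'$ can create a surviving cycle, which is why your ``clean alternating pattern fails'': the freedom to position the cut is what makes the case go through, and the paper exploits it silently via its ``index shift.''
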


\begin{proof}
	Write $G=G(n,k)\# G(n,k)$. Then $G$ is cubic with $4n$ vertices and $6n$ edges, so that, by Theorem \ref{Main Theorem} and
	\cite[Corollary~2]{Betti-Deficiancy-Thing}, its $P_3$-hull number is at least $n+1$. We distinguish two cases. For each recall that $G(n,k)$ has a minimum size infecting set of size $\lceil\frac{n+1}{2}\rceil$ with an explicit realization in Corollary \ref{infecting_set}. We will use notation from that Corollary.
	
Consider first the case where $n$ is odd so that $l$ (and $c$) are also odd. Apply an index shift to $G(n,k)\# G(n,k)$, so that the surgery happens along the edges $u_{c-1} u_c$ and $u_{c-1}' u_c'$. By inspection, the union of our infecting sets for each $G(n,k)$ infects $G$. As the order of this union is $n+1$, we are done.

Consider next the case where $n$ is even. When $l$ is also even, a slight modification of $\mathcal S$ provides the infecting set
\[ \{u_{0}\} \cup \{v_{j+ik}, v_{j+ik}':0\leq i\leq l-1,\, 0\leq j\leq c-1,\, i\text{ even }\} \]
of size $n+1$.
When $l$ is odd, then $c$ is even. Apply an index shift to $G(n,k)\# G(n,k)$, so that the surgery happens along the edges $u_{c-1} u_c$ and $u_{c-1}' u_c'$.
This time, an infecting set is given by the union of the two infecting sets for $G(n,k)$ minus $\{u'_{c-1}\}$. The size of this set is $(\frac{n}{2}+1) +\frac{n}{2} = n+1$, and we are done.
\end{proof}

The following is a generalization of $G(n,k)$.

\begin{definition}
	Let $\sigma$ be a nontrivial permutation in $S_n$ and write it as a product of disjoint, nontrivial cycles, $\sigma = \prod_{i=1}^m \sigma_i$. Assume $|\sigma_i| \geq 3$ for $1 \leq i \leq m$.
	
	The \textit{GGP graph} for $\sigma$, $GGP(n,\sigma)$ is a graph on $2n$ vertices with vertex set
	$$V=\{u_0, u_1,\dots, u_{n-1}, v_0, v_1, \dots, v_{n-1}\}$$
	and edge set
	\[ E= \{u_i u_{i+1}, \, u_i v_i, \, v_i v_{\sigma(i)}: 0\leq i \leq n-1\} \]
	with indices interpreted modulo $n$.
\end{definition}

The assumption above that each $|\sigma_i| \geq 3$ guarantees that $GGP(n,\sigma)$ is a simple cubic graph. Note that if $\sigma(i)=i+k$, then $GGP(n,\sigma) = G(n,k)$.

\newcount\CircNum
\newcommand\Clr{black}
\begin{example} \label{3 inner triangles}
	The following is an example of a GGP graph.
	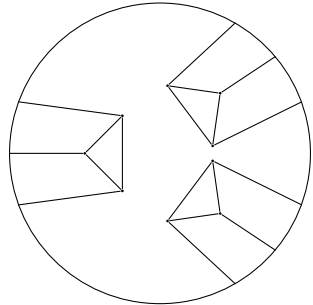
\begin{figure}[h]
		\centering
		\begin{tikzpicture}
			\node [circle, draw, minimum size=4cm] (c) {};
			
			\foreach \a in {1,2,...,18}{
				\ifnum\a=\the\CircNum
				\renewcommand\Clr{blue}
				\else
				\renewcommand\Clr{black}
				\fi
				\node[circle,inner sep=0pt,minimum size=1pt] at (c.\a*360/18) {}
				{};
			};
			
			\newlength\Radius
			\setlength\Radius{2cm}
			\node[fill,circle,inner sep=0pt,minimum size=1pt](A1) at (-1,0) {};
			\node[fill,circle,inner sep=0pt,minimum size=1pt](A2) at (-0.5,-0.5) {};
			\node[fill,circle,inner sep=0pt,minimum size=1pt](A3) at (-0.5,0.5) {};
			
			\draw (A1) -- (A2) -- (A3) --(A1);
			
			\draw (c.9*360/18) -- (A1);
			\draw (c.10*360/18) -- (A2);
			\draw (c.8*360/18) -- (A3);
			
			\node[fill,circle,inner sep=0pt,minimum size=1pt](A1) at (0.8,0.8) {};
			\node[fill,circle,inner sep=0pt,minimum size=1pt](A2) at (0.7,0.1) {};
			\node[fill,circle,inner sep=0pt,minimum size=1pt](A3) at (0.1,0.9) {};
			
			\draw (A1) -- (A2) -- (A3) --(A1);
			
			\draw (c.2*360/18) -- (A1);
			\draw (c.1*360/18) -- (A2);
			\draw (c.3*360/18) -- (A3);
			
			\node[fill,circle,inner sep=0pt,minimum size=1pt](A1) at (0.8,-0.8) {};
			\node[fill,circle,inner sep=0pt,minimum size=1pt](A2) at (0.7,-0.1) {};
			\node[fill,circle,inner sep=0pt,minimum size=1pt](A3) at (0.1,-0.9) {};
			
			\draw (A1) -- (A2) -- (A3) --(A1);
			
			\draw (c.16*360/18) -- (A1);
			\draw (c.17*360/18) -- (A2);
			\draw (c.15*360/18) -- (A3);
		\end{tikzpicture}
		\caption{Example of $GGP(9,\sigma)$}
		\label{fig:GGP1}
	\end{figure}
\end{example}

	Note that the example of $GGP(9,\sigma)$ in Figure \ref{fig:GGP1} has a decycling number of 6, but that $\lceil\frac{9+1}{2}\rceil = 5$. It follows that Corollary \ref{p3_hull_GP} does not hold for all $GGP$ graphs. It is expected that a general formula for the $P_3$-hull number for all $GGP$ graphs is difficult. We can, however, provide some bounds.

\begin{theorem}\label{odd cycles upper bound}
	Let $\sigma$ be a nontrivial permutation in $S_n$ written as a product of disjoint, nontrivial cycles, $\sigma = \prod_{i=1}^m \sigma_i$, with $n_i = |\sigma_i| \geq 3$ for $1 \leq i \leq m$. Write $k$ for the number of $n_i$ that are odd and let $G=GGP(n,\sigma)$.
	
	Then \[h_{P_3}(G) \geq \bigg\lceil \frac{n+1}{2} \bigg\rceil.\] If each $n_i$ is even, then  \[h_{P_3}(G)  = \frac{n+2}{2}.\] If there are odd $n_i$, then \[h_{P_3}(G)  \leq \frac{n+k}{2}.\]

Note, in particular, $h_{P_3}(G)  = \frac{n+2}{2}$ for $k=0$, $h_{P_3}(G)  = \frac{n+1}{2}$ for $k=1$, and $h_{P_3}(G)  = \frac{n+2}{2}$ for $k=2$.
\end{theorem}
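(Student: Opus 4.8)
The plan is to work throughout with the decycling number, since Theorem~\ref{Main Theorem} gives $h_{P_3}(G)=\nabla(G)$ for the cubic graph $G=GGP(n,\sigma)$ on $2n$ vertices and $3n$ edges. For the lower bound I would repeat the edge count behind \cite[Corollary~2]{Betti-Deficiancy-Thing}: if $S$ is a decycling set then $G[V-S]$ is a forest, so it has at most $(2n-|S|)-1$ edges, while deleting $|S|$ vertices of degree $3$ removes at most $3|S|$ edges and hence leaves at least $3n-3|S|$. Combining $3n-3|S|\le 2n-|S|-1$ yields $|S|\ge\frac{n+1}{2}$, and integrality gives $\nabla(G)\ge\lceil\frac{n+1}{2}\rceil$.

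For the upper bounds I would exploit the structure: the inner vertices $v_0,\dots,v_{n-1}$ split into the $m$ disjoint cycles cut out by the $\sigma_i$, of lengths $n_i$, while the outer vertices form a single $n$-cycle. On each inner cycle I would delete an alternating (independent) set: $\frac{n_i}{2}$ vertices when $n_i$ is even, which destroys every inner edge of that cycle, and $\frac{n_i-1}{2}$ vertices when $n_i$ is odd, which leaves exactly one surviving inner edge $v_{a_i}v_{b_i}$. This removes $\sum_{n_i\text{ even}}\frac{n_i}{2}+\sum_{n_i\text{ odd}}\frac{n_i-1}{2}=\frac{n-k}{2}$ inner vertices, and the only cycles left should be the outer $n$-cycle together with the $k$ ``ears'' $u_{a_i}-v_{a_i}-v_{b_i}-u_{b_i}$, one for each odd cycle.

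When $k=0$ every inner edge is gone, so deleting any single outer vertex breaks the remaining outer cycle and produces a forest; this uses $\frac n2+1=\frac{n+2}{2}$ vertices, which matches the lower bound since $n$ is even, proving $h_{P_3}(G)=\frac{n+2}{2}$. When $k\ge 1$ I would additionally delete one outer endpoint $u_{a_i}$ of each ear. Because the $\sigma_i$ are disjoint these indices $a_i$ are distinct, so this removes exactly $k$ further vertices; deleting them simultaneously severs the outer cycle (at least one outer vertex is gone) and turns each ear into a pendant path, leaving a forest. The total is $\frac{n-k}{2}+k=\frac{n+k}{2}$, giving $h_{P_3}(G)\le\frac{n+k}{2}$. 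For $k=1$ (forcing $n$ odd) and $k=2$ (forcing $n$ even) the upper bound $\frac{n+k}{2}$ coincides with $\lceil\frac{n+1}{2}\rceil$, so these values are exact; only the intermediate estimate survives for $k\ge 3$.

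The step I expect to require the most care is verifying that the odd-cycle construction really yields a forest. One must check that on an odd cycle an alternating deletion of $\frac{n_i-1}{2}$ vertices leaves precisely one inner edge, that the surviving edges across all cycles give exactly the $k$ ears and no hidden mixed cycles, and that deleting one endpoint per ear breaks every ear-cycle as well as the outer cycle at once. The distinctness of the $a_i$ and an explicit description of the components after deletion---paths from the cut outer cycle, with pendant $v$'s and pendant ear-paths attached---should close this gap.
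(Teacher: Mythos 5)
Your proposal is correct, but it proves the theorem by a genuinely different route than the paper. You work entirely on the decycling side of Theorem \ref{Main Theorem}: your lower bound is the elementary forest--edge count (which is indeed the inequality underlying the cited \cite[Corollary~2]{Betti-Deficiancy-Thing}), and your upper bounds come from an explicit decycling set --- alternating independent sets on the inner cycles, leaving exactly one surviving inner edge per odd cycle, then one deleted outer endpoint per resulting ear. The verification you flag does close: the $a_i$ are distinct because the $\sigma_i$ are disjoint (and $a_i \neq b_j$ throughout, so no deleted vertex is an attachment point), after deletion each $v_{a_i}$ is a leaf, each ear becomes a pendant path attached only at $u_{b_i}$, the untouched surviving $v_j$ are leaves, and the outer cycle is cut, so the remainder is a forest; the counts $(n-k)/2 + k = \frac{n+k}{2}$ and $\frac{n}{2}+1=\frac{n+2}{2}$ are integers by the parity relation $n \equiv k \pmod 2$ and match the lower bound for $k \le 2$. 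The paper instead routes the upper bounds through topology: by \cite[Theorem~3]{Betti-Deficiancy-Thing} it has the exact identity $h_{P_3}(G) = \gamma_M(G) + \xi(G) = \frac{n+1+\xi(G)}{2}$, so it suffices to exhibit a spanning tree whose co-tree has at most $k-1$ odd components (take all spokes and all outer edges except one, arranged so that a chosen odd inner cycle merges with the two omitted-edge replacements into an even co-tree component). The two approaches buy different things: yours is self-contained and constructive, producing explicit infecting sets in the spirit of Corollary \ref{infecting_set} (of minimum size when $k \le 2$), which is the kind of data relevant to the paper's infecting-time questions; the paper's reduction to the Betti deficiency gives $h_{P_3}(G)$ exactly in terms of $\xi(G)$, so its bound $\xi(G) \le k-1$ sits inside a framework that in principle decides precisely when the theorem's bounds are attained.
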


\begin{proof}
    Since $G=(V,E)$ is cubic, Theorem \ref{Main Theorem} and
    \cite[Corollary 2]{Betti-Deficiancy-Thing} provide the lower bound.
    In addition, \cite[Theorem 3]{Betti-Deficiancy-Thing} shows that
    $h_{P_3}(G) = \gamma_M(G) + \xi(G)$, where $\gamma_M(G)$ is the maximum genus of $G$ and $\xi(G)$ is its Betti deficiency. As $G$ is connected, $\gamma_M(G) = \frac{\beta(G) - \xi(G)}{2}$, where $\beta(G)$ is the cycle rank of $G$, which is $|E|-|V|+1 = n + 1$ for a cubic connected graph on $2n$ vertices. Thus $h_{P_3}(G) = \frac{\beta(G) + \xi(G)}{2}=\frac{n+1+\xi(G)}{2}$.

    Consider first the case of $k = 0$. Recall that the Betti deficiency, $\xi(G)$, is the minimum over all co-trees of $G$ of the number of components with an odd number of edges. So, we only need to construct a co-tree of $G$ with at most $1$ odd component. We can construct such a tree $T$ by taking all exterior edges $u_iu_{i+1}$ except $u_0u_1$, plus all in-between edges, $u_i v_i$. Thus the components of the co-tree corresponding to $T$ are the interior cycles and $u_0u_1$. There are no odd interior cycles so $u_0u_1$ is the only odd component. Thus $\xi(G)\leq 1$ and $h_{P_3}(G)  \le \frac{n+2}{2}$. However, note that $n =\sum_{i=1}^m n_i\equiv k \mbox{ mod } 2$ is even and $h_{P_3}(G) \geq \lceil \frac{n+1}{2} \rceil = \frac{n+2}{2}$ by our lower bound.

    Turn now to the case of $k \geq 1$. To show that $\xi(G)\leq k-1$, it suffices to construct a co-tree, $T$, of $G$ with $k-1$ odd components. As a visual guide for the construction, see Figure \ref{fig:my_label}.

    \begin{figure}[H]\label{co-tree picture}
    	\centering
    	\begin{tikzpicture}
    		\node[fill,circle,inner sep=0pt,minimum size=1pt] (a) at (4*1,4*0) {};
    		\node[fill,circle,inner sep=0pt,minimum size=1pt] (b) at (4*.866,.5*4) {};
    		\node[fill,circle,inner sep=0pt,minimum size=1pt] (c) at (4*.5,.866*4) {};
    		\node[fill,circle,inner sep=0pt,minimum size=1pt, label = \hspace{0em}$u^*$] (d) at (4*0,1*4) {};
    		\node[fill,circle,inner sep=0pt,minimum size=1pt] (g) at (4*-1,0*4) {};
    		\node[fill,circle,inner sep=0pt,minimum size=1pt] (f) at (4*-.866,.5*4) {};
    		\node[fill,circle,inner sep=0pt,minimum size=1pt, label = $x^*$] (e) at (4*-.5,.866*4) {};
    		\node[fill,circle,inner sep=0pt,minimum size=1pt] (j) at (4*0,-1*4) {};
    		\node[fill,circle,inner sep=0pt,minimum size=1pt] (h) at (4*-.866,-.5*4) {};
    		\node[fill,circle,inner sep=0pt,minimum size=1pt] (i) at (4*-.5,-.866*4) {};
    		\node[fill,circle,inner sep=0pt,minimum size=1pt] (l) at (4*.866,-.5*4) {};
    		\node[fill,circle,inner sep=0pt,minimum size=1pt] (k) at (4*.5,-.866*4) {};
    		
    		\draw[black, ultra thick] (a) -- (b);
    		\draw[black, ultra thick] (b) -- (c);
    		\draw[black, ultra thick] (c) -- (d);
    		\draw[black, dashed] (d) -- (e);
    		\draw[black, ultra thick] (e) -- (f);
    		\draw[black, ultra thick] (f) -- (g);
    		\draw[black, ultra thick] (g) -- (h);
    		\draw[black, ultra thick] (h) -- (i);
    		\draw[black, ultra thick] (i) -- (j);
    		\draw[black, ultra thick] (j) -- (k);
    		\draw[black, ultra thick] (k) -- (l);
    		\draw[black, ultra thick] (l) -- (a);
    		
    		\node[fill,circle,inner sep=0pt,minimum size=1pt] (m) at (2.5*1,2.5*0) {};
    		\node[fill,circle,inner sep=0pt,minimum size=1pt] (n) at (2.5*.866,.5*2.5) {};
    		\node[fill,circle,inner sep=0pt,minimum size=1pt] (o) at (2.5*.5,.866*2.5) {};
    		\node[fill,circle,inner sep=0pt,minimum size=1pt, label = \hspace{1.5em}$v^*$] (p) at (2.5*0,1*2.5) {};
    		\node[fill,circle,inner sep=0pt,minimum size=1pt] (s) at (2.5*-1,0*2.5) {};
    		\node[fill,circle,inner sep=0pt,minimum size=1pt] (r) at (2.5*-.866,.5*2.5) {};
    		\node[fill,circle,inner sep=0pt,minimum size=1pt] (q) at (2.5*-.5,.866*2.5) {};
    		\node[fill,circle,inner sep=0pt,minimum size=1pt] (v) at (2.5*0,-1*2.5) {};
    		\node[fill,circle,inner sep=0pt,minimum size=1pt] (t) at (2.5*-.866,-.5*2.5) {};
    		\node[fill,circle,inner sep=0pt,minimum size=1pt] (u) at (2.5*-.5,-.866*2.5) {};
    		\node[fill,circle,inner sep=0pt,minimum size=1pt] (x) at (2.5*.866,-.5*2.5) {};
    		\node[fill,circle,inner sep=0pt,minimum size=1pt] (w) at (2.5*.5,-.866*2.5) {};
    		
    		\draw[black, ultra thick] (p) -- (q);
    		\draw[black, dashed] (q) -- (r);
    		\draw[black, dashed] (r) -- (n);
    		\draw[black, dashed] (n) -- (o);
    		\draw[black, dashed] (o) -- (p);
    		
    		\draw[black, dashed] (t) -- (u);
    		\draw[black, dashed] (u) -- (x);
    		\draw[black, dashed] (x) -- (w);
    		\draw[black, dashed] (w) -- (t);
    		\draw[black, dashed] (s) -- (v);
    		\draw[black, dashed] (v) -- (m);
    		\draw[black, dashed] (m) -- (s);
    		
    		\draw[black, ultra thick] (m) -- (a);
    		\draw[black, ultra thick] (n) -- (b);
    		\draw[black, ultra thick] (o) -- (c);
    		\draw[black, dashed] (p) -- (d);
    		\draw[black, ultra thick] (q) -- (e);
    		\draw[black, ultra thick] (r) -- (f);
    		\draw[black, ultra thick] (s) -- (g);
    		\draw[black, ultra thick] (t) -- (h);
    		\draw[black, ultra thick] (u) -- (i);
    		\draw[black, ultra thick] (v) -- (j);
    		\draw[black, ultra thick] (w) -- (k);
    		\draw[black, ultra thick] (x) -- (l);
    		
    	\end{tikzpicture}
    	\centering
    	\caption{A $GGP$ on 24 Vertices With 2 Odd Inner Cycles}
    	\label{fig:my_label}
    \end{figure}
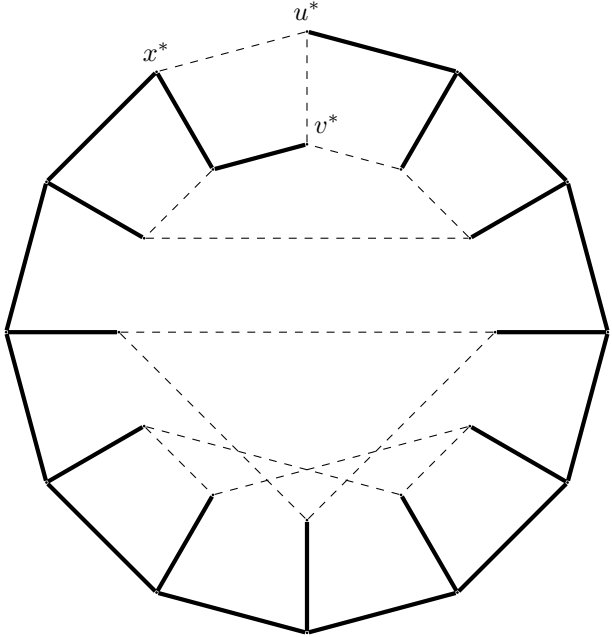

    Begin with any interior cycle, $C$, of odd length in $G$. Pick an edge $e^* = u^*v^*$ between $v^*\in C$ and $u^*\notin C$. Include all edges $u_i v_i$, except $e^*$, in $T$. Label the neighbors of $u^*$ on the exterior cycle as $x^*$ and $y^*$. Also include every exterior edge, $u_iu_{i+1}$ except $u^*x^*$, in $T$ (this includes $u^*y^*$). Finally, add to $T$ an edge of $C$ that is incident to $v^*$. This results in a spanning tree of~$G$.

    Write $H$ for the co-tree of $G$ corresponding to $T$. Observe that the remaining edges of $C$, together with $e^*$ and $u^*x^*$, form an even component of $H$. The only other components of $H$ are the rest of the interior cycles. Thus the number of odd components of $H$ is one less than the number of odd interior cycles of $G$, so $\xi(G)\leq k-1$.
\end{proof}

The upper bound in Theorem \ref{odd cycles upper bound} is achieved for certain combinations of $n$ and $k$. For instance, Figure \ref{fig:GGP1} has $n = 9, k = 3$ and $h_{P_3}(G) = 6$.
More generally, $n=3k$ and $h_{P_3}(G) = 2k$ holds for the corresponding graph with $k$ triangles in the interior achieving the upper bound.
However, there are quite general cases where the lower bound is achieved. The next theorem gives a sufficient condition based on an interleaving principle.

\begin{theorem}\label{thm:minGGP}
	Let $\sigma$ be a nontrivial permutation in $S_n$ written as a product of disjoint, nontrivial cycles, $\sigma = \prod_{i=1}^m \sigma_i$, with $n_i = |\sigma_i| \geq 3$ for $1 \leq i \leq m$.
	Let $G=GGP(n,\sigma)$. If there are $k$ odd $n_i$, assume there is a path of $k$ exterior vertices $u_j$, each of which connects to a different odd internal cycle.
Then $h_{P_3} (G) = \lceil\frac{n+1}{2}\rceil$.
\end{theorem}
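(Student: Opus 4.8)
The lower bound $h_{P_3}(G)\ge\lceil\frac{n+1}{2}\rceil$ is already supplied by Theorem \ref{odd cycles upper bound}, so the entire task reduces to exhibiting a single infecting set $S$ with $|S|=\lceil\frac{n+1}{2}\rceil$. By Theorem \ref{Main Theorem} it is equivalent to regard $S$ as an infecting set, and for the verification it is cleaner to simply watch the infection spread rather than argue acyclicity directly. The plan is to assemble $S$ from two disjoint pieces: a selection of interior vertices on the cycles $C_1,\dots,C_m$ induced by $\sigma_1,\dots,\sigma_m$, together with a small set of exterior vertices drawn from the hypothesized path.

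On the interior I would choose, on each cycle $C_i$, a maximum independent set of alternating vertices: $n_i/2$ vertices when $n_i$ is even and $(n_i-1)/2$ vertices when $n_i$ is odd. This contributes $\frac12\big(\sum_{n_i\text{ even}}n_i+\sum_{n_i\text{ odd}}(n_i-1)\big)=\frac{n-k}{2}$ interior vertices. After relabeling so that the path of hypothesis is $u_0,u_1,\dots,u_{k-1}$, with each $u_j$ joined by a spoke to a vertex of a distinct odd cycle, I would, on each odd $C_i$, rotate the alternating choice so that the spoke-endpoint $v_j$ of its path vertex $u_j$ is one of the two leftover (non-selected, mutually adjacent) vertices; this is always arrangeable on an odd cycle. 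On the exterior I would take every other path vertex $u_0,u_2,u_4,\dots$, adjoining the final path vertex as well when $k$ is even (and, in the degenerate case $k=0$, a single arbitrary exterior vertex to seed the outer cycle). This costs $\lceil\frac{k+1}{2}\rceil$ vertices, and since $n\equiv k\pmod 2$ the two pieces total $\frac{n-k}{2}+\lceil\frac{k+1}{2}\rceil=\lceil\frac{n+1}{2}\rceil$, as required.

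It then remains to verify that $S$ infects $G$. First, every even cycle infects in one step, since each non-selected vertex has both cycle-neighbors in $S$; likewise every non-leftover vertex of each odd cycle infects immediately. Second, each skipped path vertex $u_1,u_3,\dots$ has both exterior neighbors in $S$ and so infects, whence after the first step all $k$ path vertices are infected. Third, for each odd cycle the designated leftover $v_j$ now has an infected spoke-neighbor $u_j$ together with its already-infected selected cycle-neighbor, so it infects and drags its leftover partner with it; thus every interior vertex eventually becomes infected. Finally, once all interior vertices are infected, each exterior vertex has an infected spoke, so any infected exterior vertex forces each exterior neighbor (which then has two infected neighbors, the exterior one and its spoke), and the infection sweeps around the whole outer cycle. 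Hence $H_{\mathcal C}(S)=V$, and combined with the lower bound this gives $h_{P_3}(G)=\lceil\frac{n+1}{2}\rceil$.

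The main obstacle, and precisely the reason the path hypothesis is what is needed, is the odd cycles: each one necessarily leaves an adjacent pair of uninfected vertices that can be ignited only through a spoke, and those spokes must in turn be fed by exterior vertices that are themselves infected cheaply. Consecutiveness of the path is exactly what lets alternate selection infect the skipped path vertices for free, so that all $k$ odd cycles are serviced while spending only $\lceil\frac{k+1}{2}\rceil$ exterior vertices; without this interleaving one would generally pay one exterior vertex per odd cycle and overshoot the bound. The remaining work — the parity split between $k$ even and odd, the rotation making each $v_j$ a leftover, and the degenerate $k=0$ case — is routine bookkeeping.
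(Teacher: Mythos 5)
Your proposal is correct and follows essentially the same route as the paper: the lower bound is quoted from Theorem \ref{odd cycles upper bound}, and the upper bound comes from the same construction the paper uses (alternating interior vertices on each cycle, rotated on odd cycles so the spoke endpoints $v_0,\dots,v_{k-1}$ are left out, plus the $\mathcal S_u$-style selection of every other path vertex with the endpoint adjoined when $k$ is even, and a single exterior seed when $k=0$). The only difference is that you spell out the infection-spread verification and the cardinality count $\frac{n-k}{2}+\lceil\frac{k+1}{2}\rceil=\lceil\frac{n+1}{2}\rceil$, which the paper leaves as ``straightforward to verify.''
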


\begin{proof}
	Define an infecting set by following a similar strategy to Corollary \ref{infecting_set}. Relabel so that the vertices of our path of $k$ exterior vertices are $u_0, \ldots, u_{k-1}$. Each odd internal cycle connects uniquely to this path by some edge $u_i v_i$, $0\le i\le k-1$.
	
	To define the initial infecting set, $\mathcal S$, for each internal cycle, $C_i$, add to $\mathcal S$ every other vertex of the cycle as was done with $\mathcal S_v$, avoiding the points $v_0, \ldots, v_{k-1}$ on the odd internal cycles. If there are only even cycles, add one external vertex for a total of $\frac n2+1 =\lceil\frac{n+1}{2}\rceil$ points. If there are odd cycles, add vertices from our path of $k$ exterior vertices $u_0, \ldots, u_{k-1}$ as was done with $\mathcal S_u$ in Corollary \ref{infecting_set} in the case of $l$ odd. It is straightforward to verify this is an infecting set of size $\lceil\frac{n+1}{2}\rceil$.
\end{proof}

There are numerous generalizations of Theorem \ref{thm:minGGP} that can be made and it would be very interesting to determine exactly when the possible minimal and maximal $P_3$-hull numbers are obtained for $GGP$ graphs.


\bibliographystyle{plain}
\bibliography{p3Hull}

\section{Statements and Declarations}
The authors declare that no funds, grants, or other support were received during the preparation of this manuscript.

The authors have no relevant financial or non-financial interests to disclose.

No additional data is needed for this paper.

\end{document}